\newtheorem{theorem}{Theorem}[section]
\newtheorem{lemma}{Lemma}[section]
\theoremstyle{definition}
\newtheorem{example}{Example}[section]
\theoremstyle{remark}
\newtheorem{remark}{Remark}[section]
\newcommand{\ran}{\rangle}
\newcommand{\lan}{\langle}
\newcommand{\interno}[2]{\left\langle #1 ,#2 \right\rangle}
\DeclareMathOperator{\trace}{trace}
\numberwithin{equation}{section}
\title[Self-Similar Solutions to IMCF of Ruled in $\mathbb{L}^3$]{Classification of ruled surfaces as homothetic self-similar solutions of the inverse mean curvature flow in the Lorentz-Minkowski 3-space}
\author{Greg\'orio Silva Neto \and Vanessa Silva}
\date{August 22, 2023}
\address{Instituto de Matem\'atica,
Universidade Federal de Alagoas,
Macei\'o, AL, 57072-900, Brazil}
\email{gregorio@im.ufal.br}
\address{Instituto Federal de Alagoas, Maceió, AL, 57020-510, Brazil}
\email{vanessa.silva@im.ufal.br}
\begin{document}
\footnotetext{G. Silva Neto was partially supported by National Council for Scientific and Technological Development (CNPq) and Alagoas Research Foundation (Fapeal) of Brazil}
\subjclass[2020]{53E10, 53C50, 53C42, 53B30}
\begin{abstract}
In this paper, we classify the nondegenerate ruled surfaces in the three-dimensional Lorentz-Minkowski space that are homothetic self-similar solutions for the inverse mean curvature flow. This classification shows the existence of two classes of non-cylindrical homothetic solitons: one with lightlike rulings and another one with non-lightlike rulings. 
\end{abstract}
\maketitle

\section{Introduction}

Let $\mathbb{L}^3$ be the Lorentz-Minkowski space, i.e., the vector space $\mathbb{R}^3$ equipped with the metric
\begin{equation}
    \langle \text{ },\text{ }\rangle=dx^2+dy^2-dz^2
\end{equation}
and $X:M\rightarrow\mathbb{L}^3$ be a smooth immersion of a nondegenerate surface $M,$ with nonzero mean curvature. We say that a one parameter family of immersions $X_t=X(\cdot,t):M\rightarrow \mathbb{L}^3,$ with $M_t=X_t(M),$ evolves by the inverse mean curvature flow (IMCF) if it is a solution of
\begin{equation}\label{def-fcmi}
\left\{\begin{aligned}
 \frac{\partial}{\partial t}X(p,t)=-\frac{1}{H(p,t)}N(p,t)\\
X(p,0)=X(p),\hspace{0.5cm} p\in M,
\end{aligned}\right.
\end{equation}
where $H(\cdot,t)$ and $N(\cdot,t)$ are the mean curvature and the unitary normal vector field of $M_t,$ respectively, and $t\in[0,T)$. This flow was studied in \cite{luc12}, \cite{G1} and \cite{luc14}, where we can find examples and results about the existence and regularity
of extrinsic curvature flows on semi-Riemannian manifolds, with emphasis on Lorentzian flows.

The self-similar solutions, i.e., surfaces
that move by homothetic contractions or expansions, translations, or rotations under the flow, play a notable role in the theory of singularities of extrinsic curvature flows. In this article, we will focus on self-shrinking and self-expanding homothetic solitons. The case of ruled surfaces that are translating solitons of the IMCF was classified recently by the authors in \cite{SS}.

In Euclidean space, we can find recent studies on the homothetic solitons for the IMCF in \cite{luc1}, \cite{luc2}, \cite{luc3}, \cite{luc4} and \cite{luc5}. Although we could not find in the literature results about homothetic solitons of the inverse mean curvature flow in $\mathbb{L}^3,$ the understanding of these solitons is crucial to understanding the flow, both because they are the simplest solutions and because they usually appear as singularities of the flow. In the two-dimensional setting, however, we can cite the recent work of Tenenblat and da Silva \cite{keti}, where they consider the flow of curvature and inverse curvature (of curves) in the two-dimensional light cone, contained in three-dimensional Minkowski space, proving that ellipses and hyperbolas are the only curves that evolve by homotheties, among other specific properties for these solutions.

Although the theory of curvature flows was developed mostly for spacelike hypersurfaces, the special case of self-similar solutions makes sense for both spacelike and timelike hypersurfaces. In fact, we say that an immersion $X_0:M^2\longrightarrow\mathbb{L}^3$, with non-zero mean curvature $H$, is an homothetic self-similar solution of the IMCF if there exists a positive function $\phi(t)$ with $\phi(0)=1$ such that
\begin{equation}\label{condii}
X(\cdot,t)=\phi(t)X_0
\end{equation}
is a solution of \eqref{def-fcmi}. As we will prove later, the equation \eqref{condii} will be equivalent to $X_0$ satisfying the equation
\begin{equation}\label{selfsimilari}
    C\interno{N}{X_0}=-\frac{1}{H},    
\end{equation}
form some $C\in\mathbb{R}\backslash\{0\}.$ We will also show that the surface $X_0(M^2)$ contracts homothetically when $\epsilon C<0,$ and expands homothetically when $\epsilon C>0,$ where $\epsilon=\interno{N}{N}\in\{-1,1\}$. In the first case, we call $M^2$ a self-shrinker, and in the second case, we call $M^2$ a self-expander.

In this paper, we will classify homothetic self-similar solutions of the IMCF, assuming that $M^2$ is a ruled surface. A ruled surface is the union of an one parameter family of straight lines and can be parameterized locally by
\[
X(s,t)=\gamma(s)+t\beta(s),
\]
where $t\in\mathbb{R}$, $\gamma:I\subset\mathbb{R}\rightarrow\mathbb{L}^3$ is a smooth curve on $\mathbb{L}^3$ and $ \beta(s)$ is a field of vectors along $\gamma$. The curve $\gamma(s)$ is called the base curve of the surface $M^2$ and $\beta(s)$ are the generators of the surface. If $\gamma$ reduces to a point, the surface is called conic. On the other hand, if the vector field $\beta(s)$ is parallel to a fixed direction, for all $s$, i.e., $\beta(s)$ is a constant vector field, then the surface is called cylindrical.

Ruled surfaces in $\mathbb{L}^3$ were the object of study by several authors, such as \cite{Koba}, \cite{Vande}, \cite{Dillen}, \cite{dillen-k}, \cite{Kim}, \cite{Kim2}, \cite{Kim3}, \cite{Kim4}, \cite{liu}, and \cite{ali}, among others.

The article will be structured as follows: In the section \ref{prel}, we will introduce some basic concepts and results of differential geometry and linear algebra in $\mathbb{L}^3,$ and deduce the basic equations that will be useful in the proofs of the main theorems. In section \ref{noncyl} we classify non-cylindrical ruled surfaces that are homothetic solitons, and in section \ref{cyl}, we conclude the paper by classifying all ruled cylindrical surfaces that are homothetic solitons.
\section{Preliminaries}\label{prel}

In this section, for the sake of completeness, we will present the basic concepts about differential geometry of surfaces in $\mathbb{L}^3.$ More information can be found, for example, in \cite{book-CL}, \cite{lopez-survey} and \cite{oneill}.

Let $\mathbb{L}^3$ be the three-dimensional Lorentz-Minkowski space, i.e., the Euclidean space $\mathbb{R}^3$ with its vector space structure, endowed with the pseudo-Euclidean metric
\[
\interno{x}{y}=x_1y_1+x_2y_2-x_3y_3,
\]
where $x=(x_1,x_2,x_3)$ and $y=(y_1,y_2,y_3)$. The vectors $v\in\mathbb{L}^3$ can be of three causal types: spacelike, when $\interno{v}{v}>0$, or $v=0$; timelike, when $\interno{v}{v}<0$; and lightlike, when $\interno{v}{v}=0$, but $v\neq0$. Analogously, a nontrivial linear subspace $\mathcal{U}\subset\mathbb{L}^3$ can be classified as
\begin{enumerate}
\item[(i)] spacelike, if $\interno{.}{.}|_{\mathcal{U}}$ is positive definite;
\item[(ii)] timelike, if $\interno{.}{.}|_{\mathcal{U}}$ is negative-definite, or indefinite and nondegenerate;
\item[(iii)] lightlike, if $\interno{.}{.}|_{\mathcal{U}}$  is degenerate.
\end{enumerate}

As in Euclidean space, a set of vectors $\{v_i\}_{i\in I}$ is orthogonal if $\interno{v_i}{v_j}=0$ for every $i,j\in I,$ $i\neq j,$ and it is orthonormal if, additionally, $|v_i|=1$ for every $i\in I,$ where
\[
|v|=\sqrt{|\langle v,v\rangle|}.
\]
In the next lemma, for the sake of completeness, we list some basic properties about the orthogonality of vectors in $\mathbb{L}^3$ that will be useful in the proofs of our results:

\begin{lemma}\label{basic-1}
The vectors of $\mathbb{L}^3$ have the following basic properties:
\begin{itemize}
    \item[(i)] Two lightlike vectors of $\mathbb{L}^3$ are orthogonal if and only if they are linearly dependent;
    \item[(ii)] There are not two timelike orthogonal vectors in $\mathbb{L}^3;$
    \item[(iii)] There are no timelike vectors in $\mathbb{L}^3$ that are orthogonal to lightlike vectors.
\end{itemize}
\end{lemma}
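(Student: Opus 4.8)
The plan is to reduce all three statements to the Cauchy--Schwarz inequality in the Euclidean plane $\R^2$. For a vector $w=(w_1,w_2,w_3)\in\mathbb{L}^3$, write $\bar w=(w_1,w_2)\in\R^2$ and let $\bar u\cdot\bar v$ and $\|\cdot\|$ denote the \emph{Euclidean} inner product and norm on $\R^2$. Then the defining metric gives
\[
\interno{w}{w}=\|\bar w\|^2-w_3^2,\qquad \interno{u}{v}=\bar u\cdot\bar v-u_3v_3,
\]
so the causal character of $w$ is encoded by comparing $\|\bar w\|$ and $|w_3|$: lightlike means $\|\bar w\|=|w_3|$, timelike means $\|\bar w\|<|w_3|$, and in either case $w_3\neq 0$ whenever $w\neq 0$ (if $w_3=0$ the two conditions force $\bar w=0$, hence $w=0$). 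Moreover orthogonality $\interno{u}{v}=0$ becomes $\bar u\cdot\bar v=u_3v_3$.

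With this dictionary each item is a short computation. For (i), the implication ``dependent $\Rightarrow$ orthogonal'' is immediate from bilinearity together with $\interno{u}{u}=0$. For the converse, if $u,v$ are lightlike and orthogonal then
\[
|u_3v_3|=|\bar u\cdot\bar v|\le\|\bar u\|\,\|\bar v\|=|u_3|\,|v_3|,
\]
and since the two ends coincide, equality holds in Cauchy--Schwarz; hence $\bar u$ and $\bar v$ are proportional, and matching the third coordinates (using $u_3\neq0$) upgrades this to linear dependence in $\mathbb{L}^3$. For (ii), two timelike orthogonal vectors would give $|u_3v_3|=|\bar u\cdot\bar v|\le\|\bar u\|\,\|\bar v\|<|u_3|\,|v_3|$, a contradiction. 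For (iii), if $u$ is timelike and $v$ is lightlike and they are orthogonal, the same chain reads
\[
|u_3v_3|=|\bar u\cdot\bar v|\le\|\bar u\|\,\|\bar v\|<|u_3|\,\|\bar v\|=|u_3|\,|v_3|=|u_3v_3|,
\]
again impossible (here $\|\bar v\|=|v_3|>0$ since $v$ is lightlike).

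The only step requiring care is the converse in (i): Cauchy--Schwarz controls only the planar parts $\bar u,\bar v$, so after concluding $\bar v=\mu\bar u$ one must verify the third components are consistent, which follows by substituting back into $u_3v_3=\bar u\cdot\bar v=\mu\|\bar u\|^2=\mu u_3^2$ and dividing by $u_3\neq0$ to get $v_3=\mu u_3$, whence $v=\mu u$. I expect this bookkeeping, rather than any conceptual difficulty, to be the main point to get right.
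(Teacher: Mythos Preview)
Your argument is correct. Note, however, that the paper does not actually prove this lemma: it is stated there as a list of ``basic properties'' with no proof, the reader being implicitly referred to the standard references on Lorentz--Minkowski geometry cited at the start of the preliminaries. Your reduction to the Euclidean Cauchy--Schwarz inequality on the first two coordinates is a clean way to supply the justification; the bookkeeping you flag in part~(i)---recovering $v_3=\mu u_3$ from $\bar v=\mu\bar u$ by substituting into $u_3v_3=\bar u\cdot\bar v$---is handled correctly, and the observation that lightlike or timelike $w\neq 0$ forces $w_3\neq 0$ (hence $\bar w\neq 0$ in the lightlike case) is exactly what makes the equality case of Cauchy--Schwarz and the subsequent division legitimate.
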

It will also be useful to remember the concepts of vector product and mixed product of vectors in $\mathbb{L}^3.$ The vector product of $u$ and $v$ is the only vector $u\times v\in\mathbb{\mathbb{L}}^3$ such that
\[
\interno{u\times v}{x}=\det(x,u,v)\text{, for every }x\in\mathbb{L}^3.
\]
In its turn, the mixed product of the vectors $u,v,w\in\mathbb{L}^3$ is defined by
\begin{equation}\label{mixed}
(u,v,w):=\interno{u\times v}{w}=\det(u,v,w).
\end{equation}

As in Euclidean space, we also obtain a version of the Lagrange identity on $\mathbb{L}^3$, that is, if $u,v\in\mathbb{L}^3$ , then
\begin{equation}\label{Lagrange}
\interno{u\times v}{u\times v}=-\interno{u}{u}\interno{v}{v}+\interno{u}{v}^2.
\end{equation}

A smooth parametrized curve $\alpha:I\subset\mathbb{R}\rightarrow\mathbb{L}^3$ is regular if $\alpha'(t)\neq 0$ for every $t\in I.$ A regular curve $\alpha$ is called spacelike, timelike, or lightlike at $t_0\in I,$ if $\alpha'(t_0)$ is spacelike, timelike, or lightlike, respectively. Notice that the condition of being spacelike or timelike are open properties, i.e., if it holds for a point $t_0\in I,$ it still holds in an open interval around $t_0.$ Since our classification theorems will be essentially local (we will not use any global or topological properties), we can assume that the curves have a fixed causal type.

Given a surface $M\subset \mathbb{L}^3,$ parametrized locally by the map $X:U\subset\mathbb{R}^2\to\mathbb{L}^3,$ where $X=X(s,t),$ the first fundamental form of $M$ has the coefficients 
\[
E=\lan X_s,X_s\ran, \quad F=\lan X_s,X_t\ran \quad \mbox{and} \quad G=\lan X_t,X_t\ran.
\]
In this paper, we will deal with nondegenerate surfaces, i.e., those such that the quantity $|X_s\times X_t|=\sqrt{|EG-F^2|}\neq 0$ for all of its points.

A regular surface $M$ of $\mathbb{L}^3$ is spacelike, timelike, or lightlike, if its tangent plane $T_pM$ is spacelike, timelike, or lightlike, respectively, for every $p\in M.$ In $X(U)\subset M,$ it is defined two unit normal vector fields
\begin{equation}\label{Normal}
N=\pm\frac{X_s\times X_t}{|X_s\times X_t|}.    
\end{equation}
These normal vector fields satisfy 
\[
\lan N,N\ran=\epsilon,
\]
where $\epsilon=-1$ if $M$ is spacelike and $\epsilon=1$ if $M$ is timelike. The second fundamental form of $M$ in $\mathbb{L}^3$ is the bilinear and symmetric form $II_p:T_pM\times T_pM\to (T_pM)^\perp$ defined by
\begin{equation}
    \lan II_p(v,w),N(p)\ran=\lan -dN_p(v),w\ran,
\end{equation}
where $dN_p:T_pM\to T_pM$ is the differential of $N$ at $p.$ The second fundamental form can be expressed, in local coordinates, as
\[
\begin{aligned}
e&=\langle N,X_{ss}\rangle=\frac{(X_s,X_t,X_{ss})}{|X_s\times X_t|},\\
f&=\langle N,X_{st}\rangle=\frac{(X_s,X_t,X_{st})}{|X_s\times X_t|},\\
g&=\langle N,X_{ss}\rangle=\frac{(X_s,X_t,X_{tt})}{|X_s\times X_t|},\\
\end{aligned}
\]
where $(u,v,w)$ is the mixed product given by \eqref{mixed}.
If $M\subset\mathbb{L}^3$ is a nondegenerate surface and $N$ is its Gauss map, then its mean curvature is defined by
\[
H(p)=\frac{\epsilon}{2}\interno{\trace II_p}{N(p)}.
\]
In local coordinates, the mean curvature has the expression
 \begin{equation}\label{H-local}
 \begin{aligned}    
H&=\dfrac{\epsilon}{2}\dfrac{eG-2fF+Eg}{EG-F^2}\\
&=-\dfrac{1}{2}\dfrac{G(X_s,X_t,X_{ss})-2F(X_s,X_t,X_{st})+E(X_s,X_t,X_{tt})}{\vert EG-F^2\vert^{3/2}},
\end{aligned}
  \end{equation}
where we used that $\vert X_s\times X_t\vert^2=-\epsilon (EG-F^2)$.

An immersion $X_0:M^2\longrightarrow\mathbb{L}^3$, with nonzero mean curvature $H$, is an homothetic self-similar solution of the IMCF if there exists a positive function $\phi(t)$ with $\phi(0)=1$ such that
\begin{equation}\label{condi}
X(\cdot,t)=\phi(t)X_0
\end{equation}
is a solution of \eqref{def-fcmi}. Replacing $\eqref{condi}$ in \eqref{def-fcmi} we get
\begin{equation}\label{210}
    \phi'(t)X_0=-\frac{\phi(t)}{H(p,0)}N(p,0).
\end{equation}
Denoting by $H(p,0)=H(p),$ $N(p,0)=N(p),$ taking the inner product of \eqref{210} with $N,$ and remembering that $\epsilon=\interno{N}{N}\in\{-1,1\}$ we get
\begin{equation}\label{homotetic}
    \frac{\phi'(t)}{\phi(t)}\interno{N(p)}{X_0(p)}=-\frac{\epsilon}{H(p)}.
\end{equation}
Therefore, $\phi'(t)/\phi(t)$ is constant, i.e., there exists $C\in\mathbb{R}$ such that
\begin{equation}\label{edo-phi}
\frac{\phi'(t)}{\phi(t)}=\epsilon C,
\end{equation}
whose solution is
\begin{equation}
    \phi(t)=e^{\epsilon Ct}.
\end{equation}
This implies that, for $\epsilon C<0,$ $X_0(M^2)$ shrinks homothetically under the IMCF and, for $\epsilon C>0,$ $X_0(M^2)$ expands homothetically under the IMCF. In the first case, we call $M^2$ a self-shrinker, and in the second case, we call $M^2$ a self-expander.
Furthermore, by replacing \eqref{edo-phi} into \eqref{homotetic}, we obtain
\begin{equation}\label{selfsimilar}
    C\interno{N}{X}=-\frac{1}{H},
\end{equation}
which will be called the equation of the homothetic self-similar solutions of the IMCF.

Now, consider a ruled surface in $\mathbb{L}^3$ parametrized by $X(s,t)=\gamma(s)+t\beta(s)$. With this parametrization, we have
\[
X_t=\beta,\ X_s=\gamma'+t\beta',\ X_{ss}=\gamma''+t\beta'',\ X_{st}=\beta'\ \mbox{and}\ X_{tt}=0.
\] 
Since $X_{tt}=0,$ the mean curvature equation \eqref{H-local} can be rewritten as
\begin{equation}\label{H-local-2}
H=-\frac{1}{2}\displaystyle\frac{G(X_s,X_t,X_{ss})-2F(X_s,X_t,X_{st})}{\vert EG-F^2\vert^{3/2}}.    
\end{equation}
Replacing \eqref{H-local-2} in \eqref{selfsimilar}, we obtain
$$\frac{C}{\sqrt{\vert EG-F^2\vert}}(X_s,X_t,X)\left[\frac{G(X_s,X_t,X_{ss})-2F(X_s,X_t,X_{st})}{\vert EG-F^2\vert^{3/2}}\right]=2$$
or, equivalently, at the nondegenerate points,
\begin{equation}\label{princ}
C(X_s,X_t,X)[G(X_s,X_t,X_{ss})-2F(X_s,X_t,X_{st})]=2(EG-F^2)^2.
\end{equation} 
This will be the equation we analyze in this paper.

\section{Non-cylindrical homothetic Self-Similar Solutions}\label{noncyl}

In this section, we classify all the homothetic solutions of the IMCF that are nondegenerate, non-cylindrical, ruled surfaces, i.e., the solutions of \eqref{princ} with parametrization $X(s,t)=\gamma(s)+t\beta(s)$ where $\gamma:I\subset\mathbb{R}\longrightarrow\mathbb{L}^3$ is a regular curve and $\beta'(s)\neq0$ in the interval $I$. We start with the case when $\beta$ is (locally) a lightlike vector field, i.e., $\interno{\beta}{\beta}=0.$ Since $X_t(s,t)=\beta(s)$ is a lightlike vector field of the tangent space of $M,$ we have that $M$ is always timelike.

\begin{theorem}\label{thm-light}
Let $M$ be a non-cylindrical, nondegenerate, ruled surface of $\mathbb{L}^3,$ parametrized by $X(s,t)=\gamma(s)+t\beta(s)$, such that $\beta$ is a lightlike vector field parametrized by the arc length, i.e., such that $\interno{\beta'(s)}{\beta'(s)}=1$. If $M$ is a homothetic self-similar solution for the inverse mean curvature flow, then
\begin{itemize}
    \item[(i)] $M$ is a timelike self-expander with $C=1;$
    \item[(ii)] $\gamma(s)=a(s)\beta(s)+b(s)\beta'(s)$ where $a(s)$ and $b(s)\neq 0$ are real smooth functions;
    \item[(iii)] $(\beta''(s),\beta(s),\beta'(s))=1.$ 
\end{itemize}
Conversely, let $M$ be a ruled surface parametrized by $X(s,t)=\gamma(s)+t\beta(s),$ where $\beta(s)$ is a lightlike vector field, parametrized by the arc length. If $\beta$ and $\gamma$ satisfy (ii) and (iii), for any real functions $a(s)$ and $b(s)\neq 0,$ then $M$ is a timelike self-expander of the inverse mean curvature flow with $C=1$.

\end{theorem}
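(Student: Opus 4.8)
My plan is to reduce the whole theorem to a single computation of the soliton equation \eqref{princ} carried out in the moving frame $\{\beta,\beta',\beta''\}$ adapted to the lightlike ruling. First I would record the relations forced by the hypotheses $\interno{\beta}{\beta}=0$ and $\interno{\beta'}{\beta'}=1$: differentiating gives $\interno{\beta}{\beta'}=0$, $\interno{\beta'}{\beta''}=0$ and $\interno{\beta}{\beta''}=-1$, so $\{\beta,\beta',\beta''\}$ is a null frame. The decisive simplification is that a lightlike ruling gives $G=\interno{X_t}{X_t}=\interno{\beta}{\beta}=0$; hence the term $G(X_s,X_t,X_{ss})$ in \eqref{princ} vanishes and the unwieldy second derivative $X_{ss}$ never enters. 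Thus \eqref{princ} collapses to
\[
-2CF\,(X_s,X_t,X)\,(X_s,X_t,X_{st})=2F^4,
\]
where $F=\interno{X_s}{X_t}=\interno{\gamma'}{\beta}$ is independent of $t$ and $EG-F^2=-F^2$.

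For the converse I would substitute (ii). Writing $X=(a+t)\beta+b\beta'$ gives $X_t=\beta$, $X_{st}=\beta'$ and $X_s=a'\beta+(a+b'+t)\beta'+b\beta''$, so that $F=-b\neq0$ and, expanding the mixed products in the frame and discarding every determinant with a repeated vector,
\[
(X_s,X_t,X_{st})=b\,(\beta'',\beta,\beta'),\qquad (X_s,X_t,X)=b^2\,(\beta'',\beta,\beta').
\]
Substituting, both sides become multiples of $2b^4$ and the equation reduces to $C\,(\beta'',\beta,\beta')^2=1$; condition (iii) then forces $C=1$. Since $X_t=\beta$ is lightlike, the tangent plane is timelike, so $\epsilon=1$, $\epsilon C=1>0$, and $M$ is a timelike self-expander, which is exactly (i).

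For the direct implication I would keep $\gamma$ arbitrary and read the same reduced equation as a polynomial identity in $t$. Here $F$ and $(X_s,X_t,X_{st})=(\gamma',\beta,\beta')$ are independent of $t$, while $(X_s,X_t,X)=(\gamma',\beta,\gamma)+t\,(\beta',\beta,\gamma)$ is affine in $t$; matching the coefficient of $t$ forces $(\beta',\beta,\gamma)=0$. Using that $\beta\times\beta'$ is lightlike and orthogonal to $\beta$, Lemma \ref{basic-1} gives $\beta\times\beta'=\lambda\beta$ with $\lambda\neq0$, so this is equivalent to $\interno{\beta}{\gamma}=0$, i.e. the $\beta''$-component of $\gamma$ vanishes and $\gamma=a\beta+b\beta'$ with $b\neq0$, proving (ii). One further observation closes the argument: since the Gram matrix of the null frame $\{\beta,\beta',\beta''\}$ has determinant $-1$, one gets $(\beta'',\beta,\beta')^2=1$ identically, so the surviving constant part $C(\beta'',\beta,\beta')^2=1$ already yields $C=1$; replacing $\beta$ by $-\beta$ if necessary (which does not change the surface) normalizes $(\beta'',\beta,\beta')=1$, giving (iii), and the lightlike ruling again yields the timelike self-expander of (i).

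The routine part is the frame bookkeeping; the only genuinely substantive points are the observation that $G=0$ kills the hard term of \eqref{princ}, and the identity $(\beta'',\beta,\beta')^2=1$ coming from the Gram determinant, which is what makes $C=1$ \emph{forced} rather than merely $C>0$. I would check carefully that condition (iii) enters both mixed products, so that it appears quadratically in the final scalar equation.
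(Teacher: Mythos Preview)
Your argument is correct and follows essentially the same route as the paper: both exploit $G=0$ to kill the $X_{ss}$ term in \eqref{princ}, reduce to a polynomial in $t$, use the $t$-coefficient to force $\gamma\in\mathrm{span}\{\beta,\beta'\}$, and then read off $C(\beta'',\beta,\beta')^2=1$ from the constant term. One small point to tighten: the coefficient of $t$ gives $(\beta',\beta,\gamma)(\gamma',\beta,\beta')=0$, not $(\beta',\beta,\gamma)=0$ directly; you must note that $(\gamma',\beta,\beta')=0$ would make the whole left side vanish while $2F^4\neq0$, exactly as the paper does. Your use of the Gram determinant of $\{\beta,\beta',\beta''\}$ to obtain $(\beta'',\beta,\beta')^2=1$ is a clean shortcut; the paper instead shows $\beta'\times\beta=k\beta$ with $k^2=1$ via the Lagrange identity and then computes $(\beta'',\beta,\beta')=k$, but the content is the same.
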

\begin{proof}
If $\beta$ is a lightlike vector field, i.e., $\interno{\beta}{\beta}=0$ then $\interno{\beta'}{\beta}=0$. In this way, we can choose a parametrization such that $\interno{\beta'}{\beta'}=1.$ It follows from the Lagrange identity \eqref{Lagrange}, that
$$\interno{\beta'\times \beta}{\beta'\times \beta}=-\interno{\beta'}{\beta'}\interno{\beta}{\beta}+\interno{\beta'}{\beta}^2=0,$$
i.e., $\beta'\times\beta$ is a lightlike vector field. This gives, by using Lemma \ref{basic-1}, that there exists a function $k(s)$ such that
\begin{equation}\label{vec-prod-beta}
    \beta'(s)\times \beta(s)=k(s)\beta(s).
\end{equation}
Taking derivatives in \eqref{vec-prod-beta}, we obtain
\[
\beta''\times \beta=k'\beta+k\beta',
\]
that gives
\[
\interno{\beta''\times \beta}{\beta''\times \beta}=k^2.
\]
On the other hand, by using the Lagrange identity \eqref{Lagrange},
\[
\interno{\beta''\times \beta}{\beta''\times \beta}=-\interno{\beta''}{\beta''}\interno{\beta}{\beta}+\interno{\beta''}{\beta}^2=1,
\]
i.e.,
\[
k(s)^2=1.
\]
By taking $\tilde{\beta}(s)=-\beta(s)$ if necessary (notice that, in this case, we obtain the same ruled surface), we can assume that $k(s)=1,$ i.e.,
\begin{equation}\label{vec-prod-beta-2}
    \beta'(s)\times \beta(s)=\beta(s).
\end{equation}
The coefficients of the first fundamental form of $M$ are given by
 \[
\begin{cases}
E&=\interno{\gamma'}{\gamma'}+2t\interno{\gamma'}{\beta'}+t^2,\\
F&=\interno{\gamma'}{\beta},\\
G&=\interno{\beta}{\beta}=0\\
\end{cases}
 \]
 which implies that $F=\interno{\gamma'}{\beta}\neq 0$, since the surface is nondegenerate. It follows, from \eqref{princ}, that
 $$C(X_s,X_t,X)[-2F(X_s,X_t,X_{st})]=2(-F^2)^2$$
 i.e.,
 \begin{equation}\label{II}
	C(X_s,X_t,X)(X_s,X_t,X_{st})=-F^3.
	\end{equation}
 Since
 $$X_s\times X_t=\gamma'\times\beta+t\beta'\times\beta,
 $$
 we obtain
\begin{equation}\label{eq.luz-1}      (X_s,X_t,X)=\interno{\gamma'\times\beta+t\beta'\times\beta}{\gamma+t\beta}=(\gamma',\beta,\gamma)+t(\beta',\beta,\gamma)
 \end{equation}
and
\begin{equation}\label{eq.luz-2} 
     (X_s,X_t,X_{st})=(\gamma',\beta,\beta')+t(\beta',\beta,\beta')=(\gamma',\beta,\beta').
 \end{equation}
 Replacing \eqref{eq.luz-1} and \eqref{eq.luz-2} in \eqref{II}, we have
 \begin{equation}\label{32}  C[(\gamma',\beta,\gamma)+t(\beta',\beta,\gamma)](\gamma',\beta,\beta')=-\interno{\gamma'}{\beta}^3,
 \end{equation} 
 which is equivalent to an identically zero polynomial, in the variable $t,$ with coefficients depending on $s,$ given by
 \begin{equation}\label{pol}
    p(t):=C(\gamma',\beta,\gamma)(\gamma',\beta,\beta')+\interno{\gamma'}{\beta}^3+tC(\beta',\beta,\gamma)(\gamma',\beta,\beta')=0.
\end{equation}
It follows that
\begin{equation}
\begin{cases}\label{Pol-partes}
   C(\beta',\beta,\gamma)(\gamma',\beta,\beta')&=0\\
C(\gamma',\beta,\gamma)(\gamma',\beta,\beta')+\interno{\gamma'}{\beta}^3&=0. 
\end{cases}    
\end{equation}
The first equation of \eqref{Pol-partes} gives us two possibilities: $(\gamma',\beta,\beta')=0,$ which implies, from the second equation of \eqref{Pol-partes}, that $F=\interno{\gamma'}{\beta}=0,$ which leads us to a contradiction, or $(\beta',\beta,\gamma)=0$ which gives that $\{\beta',\beta,\gamma\}$ is linearly dependent. In this last case, there exist smooth functions $a(s)$ and $b(s)$ such that 
\begin{equation}\label{ld}
    \gamma(s)=a(s)\beta(s)+b(s)\beta'(s).
\end{equation}
By taking the derivative of \eqref{ld}, we have
 \begin{equation}\label{dld}
     \gamma'=a'\beta+(a+b')\beta'+b\beta''.
 \end{equation}
This gives
 \begin{equation}
 \label{vet}
  \gamma'\times\beta=(a+b')(\beta'\times\beta)+b(\beta''\times \beta) =(a+b')\beta+b(\beta''\times \beta). 
 \end{equation}
 Taking the inner product of \eqref{vet} with $\gamma$ we obtain
\begin{equation}\label{mgamma}      (\gamma',\beta,\gamma)=b^2(\beta'',\beta,\beta').
 \end{equation}
 Similarly, taking the inner product of \eqref{vet} with $\beta',$ we have
 \begin{equation}\label{mbeta}
(\gamma',\beta,\beta')=b(\beta'',\beta,\beta').
  \end{equation}
Notice that  $\interno{\gamma'}{\beta}=b\interno{\beta''}{\beta}$
and, by taking the derivative on $\interno{\beta}{\beta'}=0,$ it holds $\interno{\beta}{\beta''}=-1.$ Thus, it follows that
\begin{equation}\label{gbprinc}
   \interno{\gamma'}{\beta}=-b\neq0.  
 \end{equation}
Replacing the expressions obtained in $\eqref{mgamma}$, $\eqref{mbeta}$ and \eqref{gbprinc} in the second equation of the system \eqref{Pol-partes} we have
 \begin{equation}
b^3C(\beta'',\beta,\beta')^2-b^3=0,
 \end{equation}
 or, equivalently,
 
\begin{equation}\label{expander}   C(\beta'',\beta,\beta')^2=1. 
\end{equation}
Since, by \eqref{vec-prod-beta-2}, 
\[
(\beta'',\beta,\beta')=-(\beta',\beta,\beta'')=-\interno{\beta}{\beta''}=1,
\]
we obtain that $C=1.$ Thus, since $\epsilon=1$, because $M$ is timelike, we conclude that $M$ is a self-expander.

Conversely, if $M$ is a ruled surface parametrized by $X(s,t)=\gamma(s)+t\beta(s),$ and such that $\beta(s)$ is lightlike, $\interno{\beta'(s)}{\beta'(s)}=1,$ $(\beta''(s),\beta(s),\beta'(s))=1,$ and $\gamma(s)=a(s)\beta(s)+b(s)\beta'(s),$ for arbitrary real valued functions $a(s)$ and $b(s)\neq 0,$  then, by replacing these informations in \eqref{II} and by following the steps of the previous proof, we can conclude that $M$ is a timelike self-expander with $C=1.$

\end{proof}

\begin{example}
The class of lightlike vector fields $\beta(s),$ parametrized by arc length, such that 
\[
(\beta''(s),\beta(s),\beta'(s))=1
\]
is not empty. In fact, take the vectors
\begin{equation}\label{ABC}
\vec{A}=(0,a_0,a_0), \ \vec{B}=(b_0,b_0,b_0), \ \mbox{and}\ \vec{C}=(c_0,0,c_0),
\end{equation}
where $a_0,b_0,c_0>0$ are real numbers to be determined. Define
\begin{equation}\label{beta-quad}
\beta(s)=\vec{A}s^2+\vec{B}s+\vec{C}. 
\end{equation}
We have
\[
\aligned
\interno{\beta(s)}{\beta(s)}&=\interno{\vec{A}}{\vec{A}}s^4 + 2\interno{\vec{A}}{\vec{B}}s^3 + \left(2\interno{\vec{A}}{\vec{C}}+\interno{\vec{B}}{\vec{B}}\right)s^2\\
&\quad+ 2\interno{\vec{B}}{\vec{C}}s + \interno{\vec{C}}{\vec{C}}\\
&\quad =(-2a_0c_0+b_0^2)s^2=0
\endaligned
\]
if and only if $b_0^2=2a_0c_0.$ On the other hand, since $\beta'(s)=2\vec{A}s+\vec{B},$ we must have
\[
\interno{\beta'(s)}{\beta'(s)}=\interno{\vec{B}}{\vec{B}}=b_0^2=1,
\]
i.e., we have the relation
\[
2a_0c_0=b_0^2=1.
\]
In its turn, we have
\[
\aligned
(\beta''(s),\beta(s),\beta'(s))&=(2\vec{A},\vec{A}s^2+\vec{B}s+\vec{C},2\vec{A}s+\vec{B})=2(\vec{A},\vec{C},\vec{B})\\
&=2\left|
\begin{array}{ccc}
   0& a_0 & a_0 \\
   c_0& 0 & c_0 \\
   b_0&b_0&b_0\\
\end{array}
\right|\\
&=2a_0b_0c_0=b_0^3=1,\\
\endaligned
\]
if and only if $b_0=1.$ 

Therefore, defining $\beta$ as in \eqref{beta-quad}, where $\vec{A},$ $\vec{B},$ and $\vec{C}$ are given by \eqref{ABC}, with $2a_0c_0=1$ and $b_0=1$ gives an example of a lightlike vector field that satisfies the condition of the converse of Theorem \ref{thm-light}. We can get another example by taking
$\vec{A}=(a_0,0,a_0),$ $\vec{B}=(-1,-1,-1)$ and $\vec{C}=(0,c_0,c_0),$ where $2a_0c_0=1.$

This will give examples of timelike ruled self-expanders of the IMCF with $C=1,$ by taking the parametrization
\[
X(s,t)=(a(s)+t)\beta(s) + b(s)\beta'(s),
\]
for $a(s)$ and $b(s)\neq 0$ arbitrary functions (see Figure \ref{exp-lightlike-1}).

\begin{figure}[ht]
	\begin{center}
		\def\svgwidth{0.7\textwidth} 
		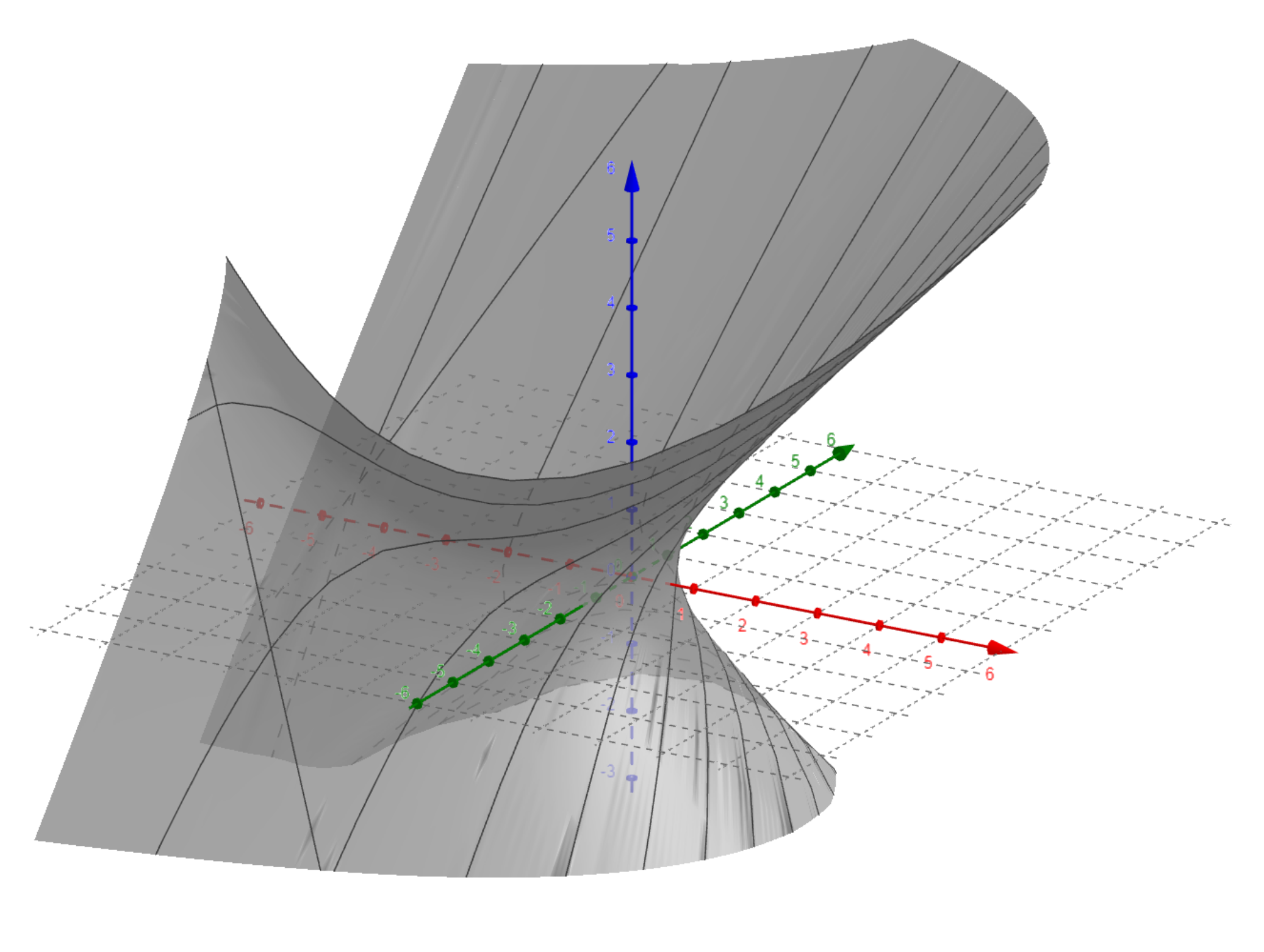
		\caption{A non-cylindrical timelike self-expander of the IMCF in $\mathbb{L}^3,$ for $C=1,$ in the conditions of Theorem \ref{thm-light}. Here we are taking $a(s)=s,$ $b(s)=s^2+1,$ $\beta(s)=(0,1,1)s^2+(1,1,1)s+\left(\frac12,0,\frac12\right).$} \label{exp-lightlike-1}
	\end{center}
\end{figure}
\end{example}

\begin{example}
    Taking $\beta(s)=(-\cos s,-\sin s,-1)$ we obtain another class of ruled surfaces satisfying the hypothesis of Theorem \ref{thm-light} (see Figure \ref{exp-lightlike-2}). 
\begin{figure}[ht]
	\begin{center}
		\def\svgwidth{0.7\textwidth} 
		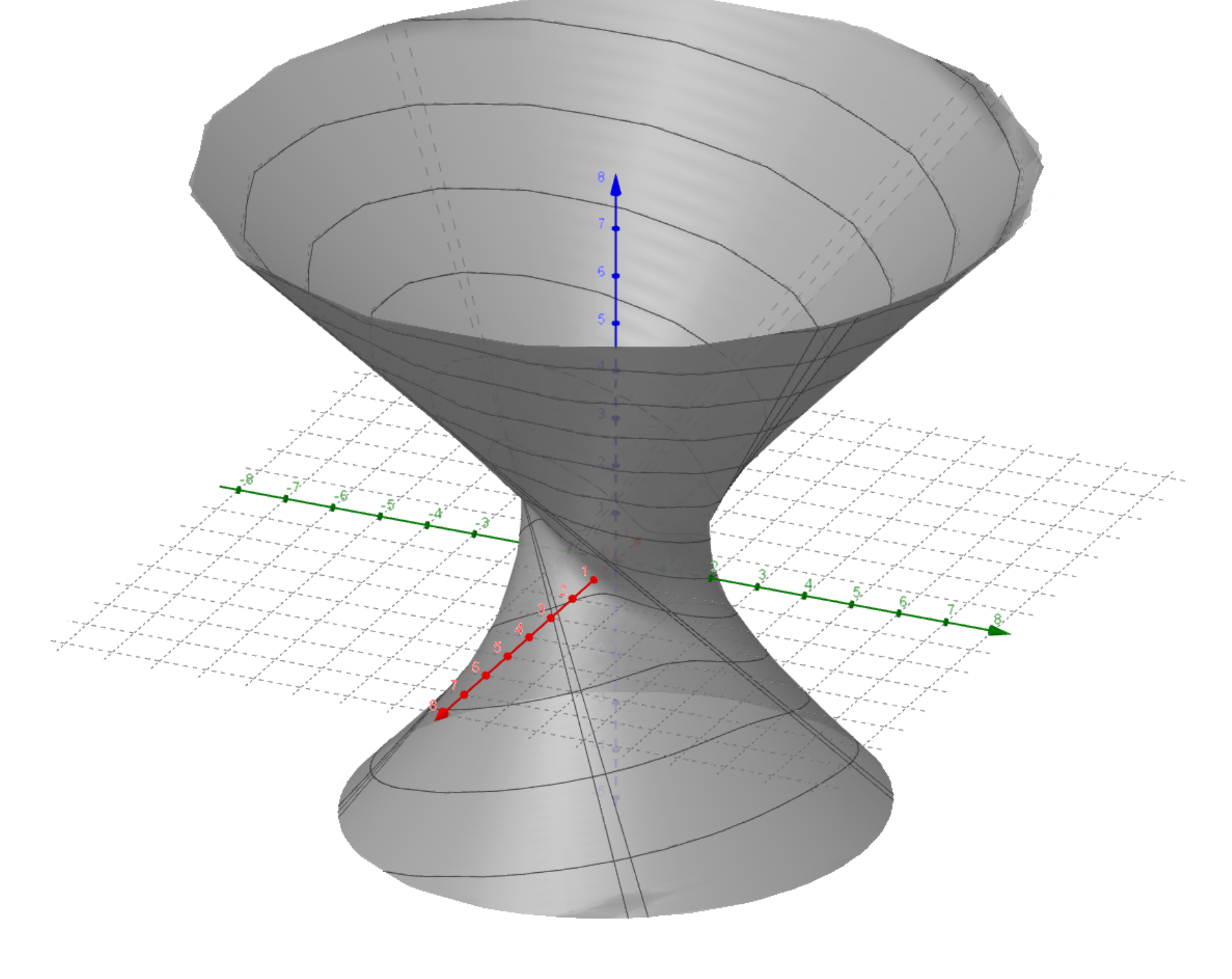
		\caption{Another non-cylindrical timelike self-expander of the IMCF in $\mathbb{L}^3,$ for $C=1,$ in the conditions of Theorem \ref{thm-light}. Here we are taking $\beta(s)=(-\cos s, -\sin s,-1),$ $a(s)=\cos s,$ and $b(s)=1+\sin^2 s.$}\label{exp-lightlike-2}
	\end{center}
\end{figure}
\end{example}

\begin{remark}
    It was proved by Dillen and K\"uhnel in \cite{Dillen} (see Theorem 2, item (iii), p.313) that ruled surfaces of $\mathbb{L}^3$ with lightlike $\beta(s)$ are Weingarten surfaces satisfying $H^2=K,$ where $K$ is the Gaussian curvature of the surface.
\end{remark}

We will continue the classification considering now the case in which $\beta(s)$ is not a lightlike vector field, i.e., such that $\beta(s)\neq 0$ in an open interval.

\begin{theorem}\label{thm-non-cyl}
    Let $M$ be a non-cylindrical smooth ruled surface $\mathbb{L}^3,$ parametrized by $X(s,t)=\gamma(s)+t\beta(s)$, such that $\beta$ is not a lightlike vector field. If $M$ is a self-similar solution for the inverse mean curvature flow, then there exists a parameter $s$ such that 
    \begin{itemize}
    \item[(i)] $\beta$ is a straight line parametrized by $\beta(s)=(1,s,s);$
     \item[(ii)] If $\gamma(s)=(x(s),y(s),z(s)),$ then
     \begin{equation}
    \left\{\aligned
   x(s)&=-\dfrac{C}{2C-8}\left(\dfrac{(C-8)k_1}{C}\right)^{\frac{C}{C-8}}s^{\left(\frac{2C-8}{C-8}\right)}+k_2\\
    y(s)&=\left(\frac{s^2+1}{2}\right)\left(\frac{(C-8)k_1s}{C}\right)^{\frac{C}{C-8}}\\
&\quad-\dfrac{C}{2C-8}\left(\dfrac{(C-8)k_1}{C}\right)^{\frac{C}{C-8}} s^{\left(\frac{3C-16}{C-8}\right)}+k_2s\\
    z(s)&=\left(\frac{s^2-1}{2}\right)\left(\frac{(C-8)k_1s}{C}\right)^{\frac{C}{C-8}}\\
&\quad-\dfrac{C}{2C-8}\left(\dfrac{(C-8)k_1}{C}\right)^{\frac{C}{C-8}} s^{\left(\frac{3C-16}{C-8}\right)}+k_2s,\\
\endaligned\right.
\end{equation}
when $C\neq 8$ (see Figure \ref{fig-non-cyl-1}) and
\begin{equation}
    \left\{\aligned
   x(s)&=-e^{k_1s}\left(s-\frac{1}{k_1}\right)+k_2\\
    y(s)&=e^{k_1s}\left(\frac{1-s^2}{2}+\frac{s}{k_1}\right)+k_2s\\
    z(s)&=e^{k_1s}\left(-\frac{1+s^2}{2}+\frac{s}{k_1}\right)+k_2s\\
\endaligned\right.
\end{equation}
when $C=8$ (see Figure \ref{fig-non-cyl-2}), where $k_1,k_2$ are constants with $k_1\neq0$.
    \end{itemize}
    \end{theorem}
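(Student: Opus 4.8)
The plan is to run a polynomial-in-$t$ analysis of \eqref{princ}, as in the proof of Theorem \ref{thm-light}, but extracting first the causal restrictions forced by the hypothesis $\interno{\beta}{\beta}\neq 0$. Writing $X(s,t)=\gamma(s)+t\beta(s)$ and recording $E,F,G$ together with the mixed products $(X_s,X_t,X)$, $(X_s,X_t,X_{ss})$ and $(X_s,X_t,X_{st})$ as polynomials in $t$, one checks that the left-hand side of \eqref{princ} has degree at most $3$ in $t$, while $2(EG-F^2)^2$ has degree $4$ with leading coefficient $2\bigl(\interno{\beta'}{\beta'}\interno{\beta}{\beta}-\interno{\beta'}{\beta}^2\bigr)^2$. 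Hence this coefficient must vanish, and by the Lagrange identity \eqref{Lagrange} this says exactly that $\beta'\times\beta$ is lightlike. First I would rule out $\beta'\times\beta=0$, which forces $\beta'\parallel\beta$, i.e.\ a cylinder, against the hypothesis; so $\beta'\times\beta$ is lightlike and nonzero.

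Next I would fix the causal type of $\beta$. After rescaling $\beta$ by a positive function (which does not change the surface) we may take $\beta$ unit, so $\interno{\beta}{\beta'}=0$, and $\beta'\times\beta$ lightlike gives $\interno{\beta'}{\beta'}\interno{\beta}{\beta}=\interno{\beta'}{\beta}^2=0$, whence $\beta'$ is lightlike. If $\beta$ were timelike this would place a timelike vector orthogonal to a lightlike one, contradicting Lemma \ref{basic-1}(iii); therefore $\beta$ is spacelike, $\interno{\beta}{\beta}=1$, and $\beta'$ is lightlike. Differentiating $\interno{\beta'}{\beta'}=0$ and $\interno{\beta}{\beta'}=0$ gives $\interno{\beta''}{\beta'}=0$ and $\interno{\beta''}{\beta}=0$; inside the timelike plane $\beta^{\perp}$, a vector orthogonal to the null direction $\beta'$ must be a multiple of $\beta'$, so $\beta''=\lambda\beta'$. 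Integrating, $\beta'$ keeps a fixed lightlike direction and $\beta$ is an affine line with lightlike direction; a Lorentz transformation and a reparametrization normalize it to $\beta(s)=(1,s,s)$, which is item (i).

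For (ii) I would substitute $\beta=(1,s,s)$, $\beta'=(0,1,1)$, $\beta''=0$ into \eqref{princ}. Then $G=1$, $\interno{\beta}{\beta'}=\interno{\beta'}{\beta'}=0$ and $\beta'\times\beta=\beta'$, so each term is a low-degree polynomial in $t$ and \eqref{princ} reduces to a polynomial identity of degree $2$. Equating the coefficients of $t^0,t^1,t^2$ produces three ODEs for $\gamma=(x,y,z)$. The coefficient of $t^2$ decouples cleanly: with $w:=y-z$ it reads $Cww''=8(w')^2$, whose solution is $w'=k_1 w^{8/C}$ and hence $w=\left(\tfrac{(C-8)k_1}{C}\,s\right)^{C/(C-8)}$ when $C\neq 8$ and $w=e^{k_1 s}$ when $C=8$. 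This is precisely the dichotomy in the statement, the borderline $C=8$ being where the power law degenerates to an exponential.

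The surviving two coefficients are the heart of the matter and the main obstacle. The cleanest organizing remark is that, for each fixed $s$, \eqref{princ} asserts that a product of two linear forms in $t$ equals a perfect square, which forces the three linear forms $(X_s,X_t,X)$, $G(X_s,X_t,X_{ss})-2F(X_s,X_t,X_{st})$ and $EG-F^2$ to be mutually proportional. I would exploit this proportionality, together with the gauge freedom $\gamma\mapsto\gamma+\mu(s)\beta$ (which leaves the surface unchanged and accounts for the terms $(k_2,k_2s,k_2s)=k_2\beta$), to reduce the remaining equations to first-order relations expressing $x$ and $y+z$ through $w$; integrating these and resubstituting the formula for $w$ should recover the stated $x(s),y(s),z(s)$, with $C=8$ handled separately. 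The hard part will be making every integration consistent with the full identity \eqref{princ} rather than with a single coefficient, and keeping track of the constants $k_1,k_2$; I expect the bookkeeping among the three proportionality relations to be exactly where the computation is most easily derailed.
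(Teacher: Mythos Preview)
Your plan is correct and matches the paper's proof almost step for step: the paper also runs the polynomial-in-$t$ analysis of \eqref{princ}, first imposing the orthogonal gauge $\interno{\gamma'}{\beta}=0$ (your freedom $\gamma\mapsto\gamma+\mu\beta$) so that $F=0$, reads off $\interno{\beta'}{\beta'}=0$ from the top coefficient to get $\beta$ spacelike and affine, normalizes $\beta(s)=(1,s,s)$, and obtains $Cuu''=8(u')^2$ for $u=y-z$ from the $t^2$ coefficient. Your proportionality remark is exactly the paper's perfect-square step---they multiply the $t^1$ equation by $\tfrac12(\vec a,\beta,\gamma)(\gamma',\beta,\gamma)$ and substitute the $t^0$ and $t^2$ equations to obtain $\bigl(2(\gamma',\beta,\gamma)\interno{\vec a}{\gamma'}-(\vec a,\beta,\gamma)\interno{\gamma'}{\gamma'}\bigr)^2=0$---which, combined with $x'=-su'$ (from $F=0$), yields $y=\tfrac{s^2+1}{2}u+sx$ and hence the stated formulas.
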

  
\begin{figure}[htb!]
	\begin{center}
		\def\svgwidth{0.9\textwidth} 
		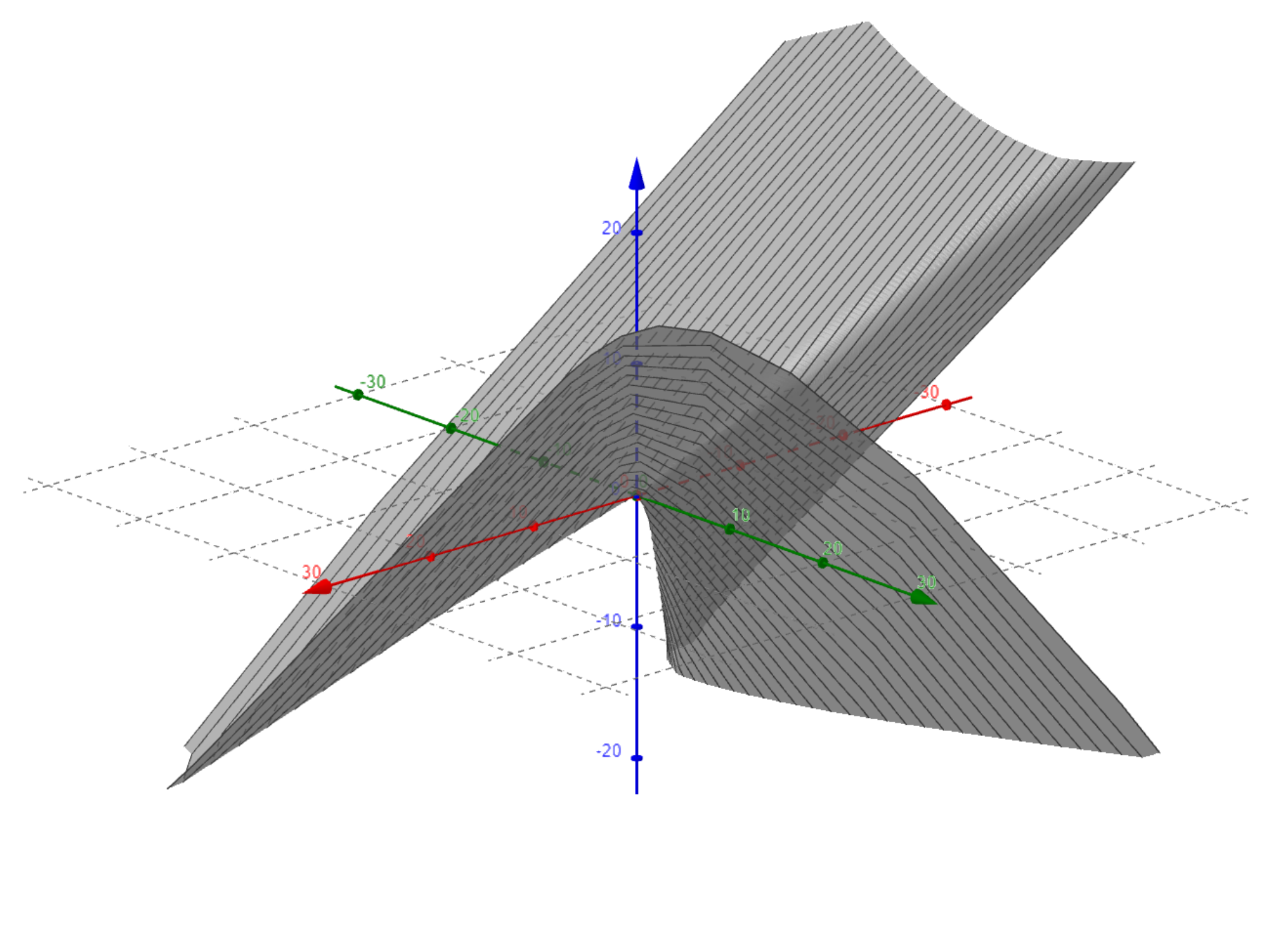
		\caption{Non-cylindrical self-similar solutions of the IMCF given by Theorem \ref{thm-non-cyl}. Here we are taking $C=k_1=9$ and $k_2=0$.} 
	\end{center}\label{fig-non-cyl-1}
\end{figure}
   
   \begin{proof}
       Suppose that the parametrization $X(s,t)=\gamma(s)+t\beta(s)$ is orthogonal, i.e., $\interno{\beta}{\gamma'}=0.$ Since $\beta$ is not lightlike, by hypothesis, we can choose the parameter $s$ such that $\interno{\beta}{\beta}=:\delta\in\{-1,1\}.$ This gives $\interno{\beta}{\beta'}=0.$ The coefficients of the first fundamental form are
\[
 \begin{cases} 
  E&=\interno{\gamma'}{\gamma'}+2\interno{\gamma'}{\beta'}t+\interno{\beta'}{\beta'}t^2,\\ F&=\interno{\beta}{\gamma'}=0,\\
  G&=\delta.\\
\end{cases}
\]
On the other hand, 
$$
(X_s,X_t,X_{ss})=(\gamma',\beta,\gamma'')+[(\gamma',\beta,\beta'')+(\beta',\beta,\gamma'')]t+(\beta',\beta,\beta'')t^2.
$$	
Replacing these facts in \eqref{princ}, we obtain
\begin{equation} \label{cond}	
   \begin{aligned}
   &C[(\gamma',\beta,\gamma)+t(\beta',\beta,\gamma)]\times\\
   &\times [\delta\{(\gamma',\beta,\gamma'')+[(\gamma',\beta,\beta'')+(\beta',\beta,\gamma'')]t+(\beta',\beta,\beta'')t^2\}]\\
   &=2[\delta(\interno{\gamma'}{\gamma'}+2\interno{\gamma'}{\beta'}t+\interno{\beta'}{\beta'}t^2)]^2.
\end{aligned}
   \end{equation}
   Making the calculations with the Equation \eqref{cond}, we obtain the identically zero fourth degree polynomial $p(t)=\sum_{i=0}^{4}A_i(s)t^i\equiv0,$ where
   \begin{equation}\label{c-pol}
\left\{\begin{aligned}
    A_0&=2\interno{\gamma'}{\gamma'}^2-C\delta(\gamma',\beta,\gamma)(\gamma',\beta,\gamma''),\\
    A_1&=8\interno{\gamma'}{\gamma'}\interno{\gamma'}{\beta'}\\
    &\quad-C\delta[(\gamma',\beta,\gamma)(\beta',\beta,\gamma'')+(\gamma',\beta,\gamma)(\gamma',\beta,\beta'')+(\beta',\beta,\gamma)(\gamma',\beta,\gamma'')],\\
    A_2&=4\interno{\beta'}{\beta'}\interno{\gamma'}{\gamma'}+8\interno{\gamma'}{\beta'}^2\\
    &\quad-C\delta[(\gamma',\beta,\gamma)(\beta',\beta,\beta'')+(\beta',\beta,\gamma)(\beta',\beta,\gamma'')+(\beta',\beta,\gamma)(\gamma',\beta,\beta'')],\\
    A_3&=8\interno{\gamma'}{\beta'}\interno{\beta'}{\beta'}-C\delta(\beta',\beta,\gamma)(\beta',\beta,\beta''),\\
    A_4&=2\interno{\beta'}{\beta'}^2.\\
\end{aligned}\right.
\end{equation}
By using that $p(t)$ is an identically zero polynomial, we obtain that $A_4=0,$ i.e., $\interno{\beta'}{\beta'}=0.$ Since $\interno{\beta}{\beta}=\delta,$ $\interno{\beta}{\beta'}=0,$ and there is no lightlike vector orthogonal to timelike vectors in $\mathbb{L}^3$ (see Lemma \ref{basic-1}, item (iii)), we have that $\beta$ is spacelike and then $\delta=1.$ Thus, $\beta'$ is a lightlike direction in the hyperboloid
$$
\{p\in\mathbb{L}^3:\interno{p}{p}=1\}.
$$
This implies that $\beta$ is a straight line, i.e.,  there exist vectors $\vec{a},\vec{b}\in\mathbb{L}^3$ such that $\beta(s)=\vec{a}s+\vec{b}$. Since $\beta'=\vec{a}$, we have 
\[
\interno{\vec{a}}{\vec{a}}=0=\langle\vec{a},\vec{b}\rangle\ \mbox{e}\ \langle\vec{b},\vec{b}\rangle=1.
\]
Therefore, up to rigid motions in $\mathbb{L}^3,$ we can consider $\vec{a}=(0,1,1)$ and $\vec{b}=(1,0,0)$. This implies that $\beta(s)=(1,s,s)$ and it proves item (i). 
Using item (i), we will consider the local parametrization of $M$ given by 				
\[
X(s,t)=\gamma(s)+t(1,s,s).
\] 
Using that $\beta(s)=(1,s,s)$ in \eqref{c-pol}, we obtain that $A_4=A_3=0$ and the system of equations
\begin{equation}\label{eq:A}
\left\{\begin{aligned}
  A_0&=2\interno{\gamma'}{\gamma'}^2-C(\gamma',\beta,\gamma)(\gamma',\beta,\gamma''),\\
  A_1&=8\interno{\gamma'}{\gamma'}\interno{\vec{a}}{\gamma'}-C[(\gamma',\beta,\gamma)(\vec{a},\beta,\gamma'')+(\vec{a},\beta,\gamma)(\gamma',\beta,\gamma'')],\\
  A_2&=8\interno{\vec{a}}{\gamma'}^2-C(\vec{a},\beta,\gamma)(\vec{a},\beta,\gamma'').
\end{aligned}\right.
\end{equation}
We claim that $\interno{\vec{a}}{\gamma'}\neq 0$. In fact, otherwise, considering \[\gamma(s)=(x(s),y(s),z(s)),\] if $y'-z'=\interno{\vec{a}}{\gamma'}=0$, it would follow, from the condition $\interno{\gamma'}{\beta}=0,$ that  
$$
\interno{\gamma'}{\beta}=\langle\vec{b},\gamma'\rangle+s\interno{\vec{a}}{\gamma'}=\langle\vec{b},\gamma'\rangle=x'=0,
$$
and this would imply that 
$$
\interno{\gamma'}{\gamma'}=(x')^2+(y')^2-(z')^2=0,
$$
i,e, $\gamma$ is lightlike, giving $EG-F^2=0,$ which is an absurd since $M$ is nondegenerate. Thus, $\interno{\vec{a}}{\gamma'}\neq 0$.

On the other hand, noting that
\begin{equation}
    \vec{a}\times \beta=\left|\begin{array}{rcr}
				e_1 & e_2  & -e_3 \\ 
				0 & 1 & 1\\
				1 & s  & s
				\end{array} \right|=(0,1,1)=\vec{a},
\end{equation}
and replacing this fact in the expression of $A_2=0$ given in \eqref{eq:A}, we have
$$C\interno{\vec{a}}{\gamma}\interno{\vec{a}}{\gamma''}-8\interno{\vec{a}}{\gamma'}^2=0,$$
i.e.,
 \begin{equation}\label{A2}
       C(y(s)-z(s))(y''(s)-z''(s))-8(y'(s)-z'(s))^2=0.
   \end{equation}
   Replacing $u(s):=y(s)-z(s)$ in \eqref{A2}, we get the ODE
   \begin{equation}\label{edoS}
     Cu(s)u''(s)-8u'(s)^2=0.
 \end{equation}
Since the solution $u(s)$ constant implies $\interno{\vec{a}}{\gamma'}=0,$ which is not possible, we will consider only the non-constant solutions of \eqref{edoS}. Defining $v(u)=u'(s),$ we obtain that \eqref{edoS} is equivalent to
\begin{equation}
    Cuv'(u)=v(u),
\end{equation}
whose solution is
\[
v(u)=k_1u^{\frac{8}{C}},
\]
which gives 
\[
u'(s)=k_1u(s)^{\frac{8}{C}},
\]
where $k_1\neq 0.$ This implies, after a translation of the parameter $s$, that
 \begin{equation}\label{exp1:y-z}
  y(s)-z(s)= u(s)=
  \begin{cases}
      \left(\dfrac{(C-8)k_1s}{C}\right)^{\frac{C}{C-8}}&,\ \mbox{for}\ C\neq 8,\\
      e^{k_1s}&,\ \mbox{for}\ C= 8.\\
  \end{cases}
\end{equation}
Since $\interno{\gamma'}{\beta}=0,$ we have
\begin{equation}\label{xp}
    	x'(s)=-(y'(s)-z'(s))s,
\end{equation}
which implies, by \eqref{exp1:y-z}, that
\begin{equation}\label{exp-x}
x(s)=
\begin{cases}
-\dfrac{C}{2C-8}\left(\dfrac{(C-8)k_1}{C}\right)^{\frac{C}{C-8}}s^{\left(\frac{2C-8}{C-8}\right)}+k_2&,\ \mbox{for} \ C\neq 8,\\
-e^{k_1s}\left(s-\dfrac{1}{k_1}\right)+k_2&,\ \mbox{for} \ C=8,
\end{cases}
\end{equation}
where $k_2\in\mathbb{R}$ is a constant.

The expressions $A_ 0=A_ 2=0$ given in $\eqref{eq:A}$, imply
\begin{equation}\label{A0A2}
   C(\gamma',\beta,\gamma)(\gamma',\beta,\gamma'')=2\langle \gamma',\gamma'\rangle^2 \quad \mbox{and} \quad C(\vec{a},\beta,\gamma)(\vec{a},\beta,\gamma'')=8\langle\vec{a},\gamma'\rangle^2.  
\end{equation}

On the other hand, multiplying the expression of $A_1=0,$ given in \eqref{eq:A}, by $\frac{1}{2}(\vec{a},\beta,\gamma)\cdot(\gamma',\beta,\gamma)$  and replacing the resulting expressions in \eqref{A0A2}, we have
\[\aligned
4(\gamma',\beta,\gamma)^2\interno{\vec{a}}{\gamma'}^2&+(\vec{a},\beta,\gamma)^2
\interno{\gamma'}{\gamma'}^2\\
&-4\interno{\gamma'}{\gamma'}\interno{\vec{a}}{\gamma'}(\gamma',\beta,\gamma)(\vec{a},\beta,\gamma)=0,
\endaligned
\]
i.e.,
\[    (2(\gamma',\beta,\gamma)\interno{\vec{a}}{\gamma'}-(\vec{a},\beta,\gamma)\interno{\gamma'}{\gamma'})^2=0,
\]
which gives
\begin{equation}\label{qperfeito}
    2(\gamma',\beta,\gamma)\interno{\vec{a}}{\gamma'}=(\vec{a},\beta,\gamma)\interno{\gamma'}{\gamma'}.
\end{equation}
Using again the notation $u(s):=y(s)-z(s)$ and \eqref{xp}, we have
\[
\interno{\vec{a}}{\gamma'}=u'(s),\quad
(\vec{a},\beta,\gamma)=\interno{\vec{a}}{\gamma}=u(s),
\]
\[
\aligned
(\gamma',\beta,\gamma)&=\left|
\begin{array}{ccc}
    x'&y'&z'\\
    1&s&s\\
   x&y&z\\
\end{array}
\right|\\
&=x's(z-y)-y'(z-sx)+z'(y-sx)\\
&=-sx'u+sx(y'-z')+z'y-y'z\\
&=s^2uu'+sxu'+(y-u)'y-y'(y-u)\\
&=su'(su+x)-yu'+y'u,
\endaligned
\]
and
\[
\aligned
\interno{\gamma'}{\gamma'}&=(x')^2+(y')^2-(z')^2\\
&=(x')^2+(y')^2-(y'-u')^2\\
&=(x')^2-(u')^2+2u'y'\\
&=(u')^2(s^2-1)+2u'y'.
\endaligned
\]
Replacing the expressions in \eqref{qperfeito} we get
\[
\aligned
2u'[su'(x+su)+uy'-yu']=u[(u')^2(s^2-1)+2u'y'].
\endaligned
\]
This gives, after simplifications,
\begin{equation}\label{y-exp}
    y(s)=\left(\frac{s^2+1}{2}\right)u(s)+sx(s),
\end{equation}
and, since $z(s)=y(s)-u(s),$ that
\begin{equation}\label{z-exp}
    z(s)=\left(\frac{s^2-1}{2}\right)u(s)+sx(s).
\end{equation}
The result follows by replacing the expressions for $x(s)$ and $u(s)$ given in \eqref{exp-x} and \eqref{exp1:y-z}, depending on $C\neq 8$ or $C=8.$
   \end{proof} 

\begin{figure}[ht]
	\begin{center}
\def\svgwidth{0.9\textwidth} 
		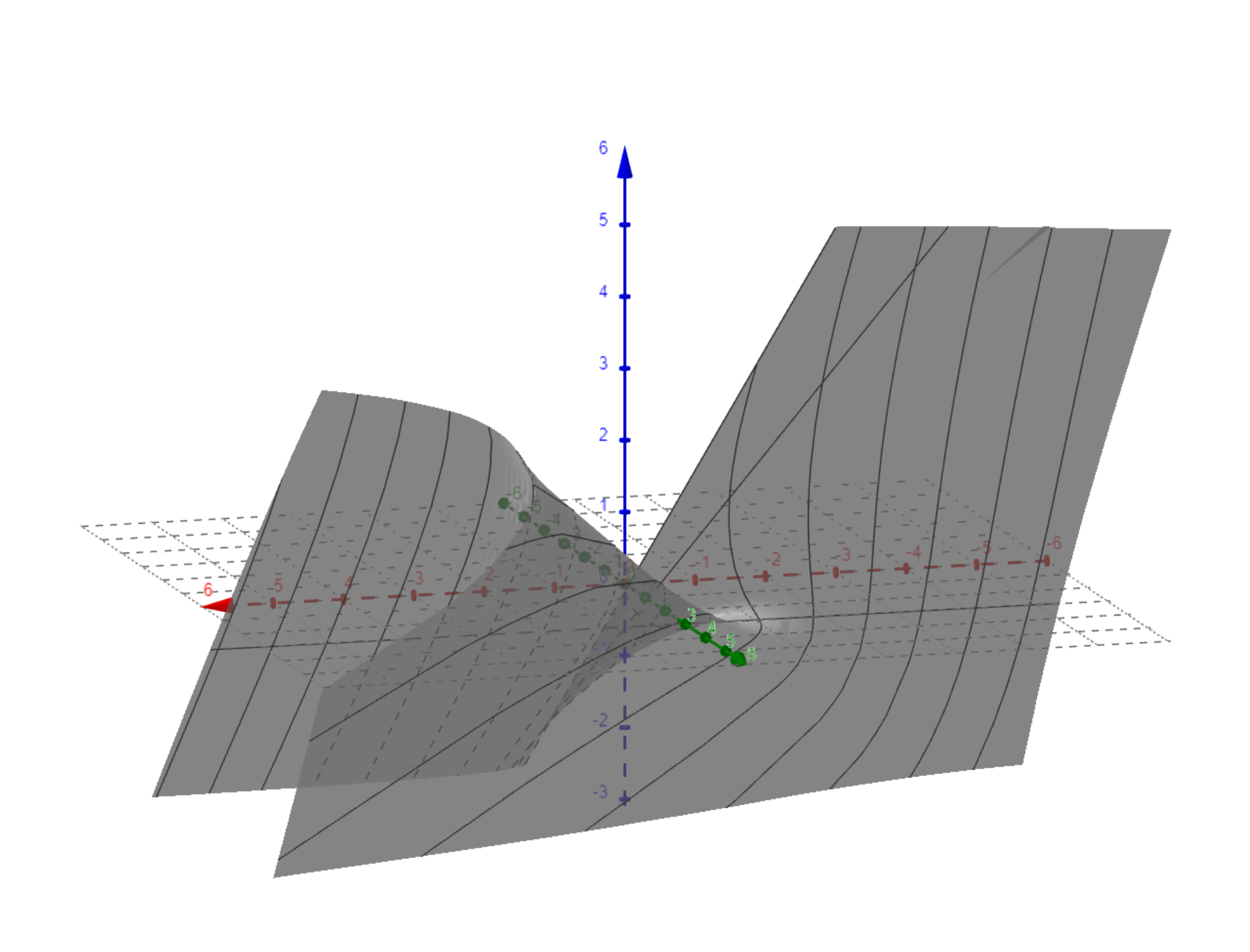
		\caption{Non-cylindrical self-similar solutions of the IMCF given by Theorem \ref{thm-non-cyl} for $C=8,$ $k_1=1,$ and $k_2=0.$}\label{fig-non-cyl-2}
	\end{center}
\end{figure}

   \section{Cylindrical Homothetic Self-Similar Solution}\label{cyl}

   In this section, we classify the cylindrical nondegenerate ruled surfaces that are homothetic self-similar solutions of the IMCF in $\mathbb{L}^3.$ These surfaces have the parametrization
\[
X(s,t)=\gamma(s)+tw
\]
where $\gamma$ is a curve parametrized by the arc length and $w$ is a constant vector. Up to rigid motions of $\mathbb{L}^3,$ we can choose
\[
w=(1,0,1),\ w=(1,0,0), \ \mbox{or} \ w=(0,0,1).
\]
With this parametrization, we have $X_t=\gamma'(s)$ and $X_t=w,$ which implies $E=\interno{\gamma'}{\gamma'}=\delta\in\{-1,1\}$, $F=\interno{\gamma'}{w}$ and $G=\interno{w}{w}$. The case $w=(1,0,1)$ cannot happen for the IMCF since $w$ being lightlike and $X_{st}=X_{tt}=0$ imply 
\begin{equation}
        H=-\frac{1}{2}\frac{eG}{EG-F^2}=-\frac{1}{2}\frac{e\interno{w}{w}}{\interno{w}{w}-\interno{\gamma'}{w}^2}=0.
    \end{equation}
Thus we have that $w=(1,0,0)$ (i.e., $w$ is spacelike) or $w=(0,0,1)$ (that is, $w$ is timelike). We start analyzing the case when $w=(1,0,0).$

Let $M$ be a nondegenerate, cylindrical, smooth ruled surface $\mathbb{L}^3$ parametrized by 
\[
X(s,t)=\gamma(s)+t(1,0,0),
\]
where $\gamma$ is a curve parametrized by the arc length, lying in a plane orthogonal to $w$, i.e., 
$$
\gamma(s)=(0,x(s),y(s)),
$$
where
\begin{equation}\label{arc-l}
(x'(s))^2-(y'(s))^2=\delta\in\{-1,1\}
\end{equation}
is the arc length equation. Since
\[
\interno{N}{N}=\frac{\interno{X_s\times X_t}{X_s\times X_t}}{|EG-F^2|}=\frac{-\interno{X_s}{X_s}\interno{X_t}{X_t}+\interno{X_s}{X_t}^2}{|EG-F^2|}=-\delta,
\]
we have that the surface is spacelike if $\delta=1$ and timelike if $\delta=-1.$ This gives us the following scenario:
\begin{center}
\begin{tabular}{|c|c|}
\hline
   $\delta=-1$  & \begin{tabular}{|c|c|}
         $C>0$ & self-expander \\
         \hline
         $C<0$ & self-shrinker
     \end{tabular} \\
   \hline
   $\delta=1$  & \begin{tabular}{|c|c|}
        $C>0$ & self-shrinker \\
         \hline
         $C<0$ & self-expander
     \end{tabular}\\
     \hline
\end{tabular}
\end{center}
The first result of this section is

\begin{theorem}\label{thm-cyl-1}
Let $M$ be a nondegenerate, smooth, cylindrical, ruled surface in $\mathbb{L}^3,$ parametrized by  $X(s,t)=\gamma(s)+t w,$ where $w=(1,0,0)$ and $\gamma(s)=(0,x(s),y(s))$ is a curve parametrized by the arc length lying in a plane orthogonal to $w$. 
Let
\[
\left\{\aligned
f(s)&:=\delta\left(\dfrac{2}{C}-1\right)s^2+k,\\
t(s)&:=\pm\displaystyle\int\frac{\sqrt{4\delta f(s)+(f'(s))^2}}{2|f(s)|}ds=\pm\int\dfrac{\sqrt{2(2-C)s^2+\delta kC^2}}{|(2-C)s^2+\delta kC|}ds,\\
r(s)&:=\pm\sqrt{|f(s)|},
\endaligned\right.
\]
where $k\in\mathbb{R}.$ If $M$ is a homothetic self-similar solution of the inverse mean curvature flow, then
\[
\gamma(s)=\begin{cases}
    (0,r(s)\sinh(t(s)),r(s)\cosh(t(s)))&, \ \mbox{for} \ f(s)>0;\\
    (0,r(s)\cosh(t(s)),r(s)\sinh(t(s)))&, \ \mbox{for} \ f(s)<0,\\
\end{cases}
\]
(see Figure \ref{fig-cyl-1}).
\end{theorem}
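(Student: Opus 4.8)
The plan is to reduce the self-similar equation \eqref{princ} to a single planar ODE for the profile curve and then integrate it in Lorentzian polar coordinates. First I would insert the cylindrical data $X_t=w=(1,0,0)$, $X_s=\gamma'=(0,x',y')$, $X_{ss}=\gamma''=(0,x'',y'')$ and $X_{st}=X_{tt}=0$ into \eqref{princ}. With $\gamma=(0,x,y)$ one reads off $F=\interno{\gamma'}{w}=0$, $E=\interno{\gamma'}{\gamma'}=(x')^2-(y')^2=\delta$, $G=1$, hence $EG-F^2=\delta$ and $2(EG-F^2)^2=2$. Because $F=0$ and $X_{st}=0$, the bracket in \eqref{princ} collapses to $G\,(X_s,X_t,X_{ss})=(X_s,X_t,X_{ss})$, and a direct determinant computation gives $(X_s,X_t,X)=xy'-x'y$ and $(X_s,X_t,X_{ss})=x''y'-x'y''$. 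Thus \eqref{princ} is equivalent to the scalar equation
\[
C\,(xy'-x'y)\,(x''y'-x'y'')=2 .
\]

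Next I would use that $\gamma$ lies in the plane $\{x_1=0\}$ with metric $dx^2-dy^2$, and parametrize it in Lorentzian polar coordinates according to the causal character of the position vector. When $\gamma$ is timelike (equivalently $x^2-y^2<0$) I set $x=r(s)\sinh(t(s))$, $y=r(s)\cosh(t(s))$; when $\gamma$ is spacelike ($x^2-y^2>0$) I set $x=r(s)\cosh(t(s))$, $y=r(s)\sinh(t(s))$. In both cases a short computation yields $xy'-x'y=-r^2t'$ (timelike) or $xy'-x'y=r^2t'$ (spacelike), and reduces the arc-length condition $(x')^2-(y')^2=\delta$ to $r^2(t')^2-(r')^2=\delta$ (timelike) or $(r')^2-r^2(t')^2=\delta$ (spacelike). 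These two regimes are precisely the dichotomy $f>0$ and $f<0$ of the statement.

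The core of the argument is to simplify the product $(xy'-x'y)(x''y'-x'y'')$ in these coordinates. Expanding $x''y'-x'y''$ in terms of $r,t$ produces a cubic-in-derivatives expression, but after multiplying by $xy'-x'y=\mp r^2t'$ and repeatedly using the arc-length relation together with its first derivative (which gives $r^2t't''=r'r''-rr'(t')^2$ in either case), all the $t$-dependence telescopes and one is left with
\[
(xy'-x'y)(x''y'-x'y'')=\pm\delta\bigl(rr''+(r')^2\bigr)+1=\pm\tfrac{\delta}{2}(r^2)''+1,
\]
with the upper sign in the timelike case and the lower in the spacelike case. Substituting into the reduced equation turns the problem into the elementary second-order ODE $(r^2)''=\pm 2\delta\!\left(\tfrac{2}{C}-1\right)$. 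Integrating twice and translating the parameter $s$ to absorb the linear term gives $r^2=\pm f(s)$ with $f(s)=\delta\!\left(\tfrac{2}{C}-1\right)s^2+k$, that is $r=\pm\sqrt{|f(s)|}$, and identifies the timelike case with $f>0$ and the spacelike case with $f<0$.

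Finally, with $r=\pm\sqrt{|f|}$ in hand, I would recover $t(s)$ from the arc-length relation: solving $r^2(t')^2=\delta+(r')^2$ for $t'$ and using $2rr'=\pm f'$ gives $t'=\pm\frac{\sqrt{4\delta f+(f')^2}}{2|f|}$, which after inserting $f$ simplifies to the closed form $t(s)=\pm\int\frac{\sqrt{2(2-C)s^2+\delta kC^2}}{|(2-C)s^2+\delta kC|}\,ds$ of the theorem; assembling $r$ and $t$ into the two polar parametrizations then reproduces the claimed formulas for $\gamma$. I expect the decisive difficulty to be the third step: the determinant and derivative bookkeeping for $x''y'-x'y''$ is lengthy, and the collapse into $\tfrac12(r^2)''$ becomes visible only after the differentiated arc-length identity is used to eliminate $t''$, so the main care is in verifying that telescoping rather than in any later integration.
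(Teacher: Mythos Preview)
Your proposal is correct, and it arrives at the same two ingredients as the paper---the relation $y^2-x^2=f(s)$ and then the arc-length equation for $t'$---but the route to the first ingredient is genuinely different.

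The paper stays in Cartesian coordinates for as long as possible. From the scalar equation $C(xy'-yx')(y'x''-x'y'')=2$ together with the differentiated arc-length identity $x'x''-y'y''=0$ it treats $x'',y''$ as the unknowns of a $2\times 2$ linear system, solves by Cramer's rule to get $x''=-\tfrac{2\delta y'}{C(xy'-yx')}$ and $y''=-\tfrac{2\delta x'}{C(xy'-yx')}$, and from these reads off $yy''-xx''=\tfrac{2\delta}{C}$. Two integrations then give $y^2-x^2=f(s)$, and only at that point are the hyperbolic polar coordinates introduced, purely to parametrize the level set $y^2-x^2=f$. Your approach instead switches to $(r,t)$ coordinates at the outset and aims to show directly that $(xy'-x'y)(x''y'-x'y'')=\pm\tfrac{\delta}{2}(r^2)''+1$. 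This identity is correct (indeed, under $(x')^2-(y')^2=\delta$ it is equivalent to $(xy'-x'y)(x''y'-x'y'')=\delta(yy''-xx'')$, exactly what the paper's linear-system step produces), but verifying it by expanding $x''y'-x'y''$ in $(r,t)$ and then eliminating $t''$ via the differentiated arc-length relation is appreciably more laborious than the paper's two-line Cramer argument. The payoff of the paper's order of operations is that no polar-coordinate derivative bookkeeping is needed at all for the hard step; the payoff of yours is conceptual---it makes transparent from the start that the equation is really an ODE for the Lorentzian radius $r$.
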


\begin{proof}
Under this parametrization, we obtain $X_s=\gamma'(s)=(0,x'(s),y'(s))$ and $X_t=(1,0,0),$ which implies $E=\interno{\gamma'}{\gamma'}=\delta$, $F=0$ and $G=1$. Noticing that $X_s\times X_t=(0,y'(s),x'(s))$ and $(EG-F^2)^2=1$, Equation \eqref{princ} becomes
    \begin{equation}\label{trans-cyl}
  C(xy'-yx')(y'x''-x'y'')=2. 
		\end{equation}
  By taking derivatives in \eqref{arc-l}, we obtain 
  \[
  x'(s)x''(s)-y'(s)y''(s)=0.
  \] 
  Thus, we have the linear system of equations, in the unknowns  $x''(s)$ and $y''(s),$
  $$\begin{cases}
		x'x''-y'y''=0\\
		y'x''-x'y''=\displaystyle\frac{2}{C(xy'-yx')},
		\end{cases}
		$$
  whose solution is
  $$x''=\frac{-2\delta y'}{C(xy'-yx')}\hspace{1cm}\text{and}\hspace{1cm}y''=\frac{-2\delta x'}{C(xy'-yx')}.$$
  The above solutions are equivalent to
  	$$\frac{-xC\delta}{2}x''=\frac{x y'}{xy'-yx'}\hspace{1cm}\text{and}\hspace{1cm}\frac{yC\delta}{2}y''=\frac{-yx'}{xy'-yx'}.$$	
   By adding the last two equalities, we get the equation
   $$\frac{C\delta}{2}(yy''-xx'')=1$$
   i.e.,
   \begin{equation}\label{subs}
    yy''-xx''=\frac{2\delta}{C}.
\end{equation}
Defining the functions  
$$
u(s):=y(s)y'(s)=\dfrac{1}{2}(y(s)^2)'\hspace{0.5cm}\text{and}\hspace{0.5cm} v(s):=x(s)x'(s)=\dfrac{1}{2}(x(s)^2)',
$$
we have
\begin{equation}\label{uv}
u'(s)=y(s)y''(s)+(y'(s))^2\hspace{0.5cm}\text{and}\hspace{0.5cm}v'(s)=x(s)x''(s)+(x'(s))^2.
\end{equation}
By using \eqref{uv} and \eqref{arc-l} in \eqref{subs}, we obtain
\begin{equation}\label{uv-deriv}
  u'(s)-v'(s)=\delta\left(\frac{2}{C}-1\right).
\end{equation}
By translating the parameter $s$, the integration of \eqref{uv-deriv}, together with the definitions of $u$ and $v,$ imply
\begin{equation}
   u(s)-v(s)=\frac{1}{2}\left[(y(s)^2)'-(x(s)^2)'\right]=\delta\left(\frac{2}{C}-1\right)s.
\end{equation}
This is equivalent to
\begin{equation}\label{c2}
    y(s)^2-x(s)^2=\delta\left(\frac{2}{C}-1\right)s^2+k,
\end{equation}
where $k\in\mathbb{R}$ is a constant. Let 
\[
f(s):=\delta\left(\frac{2}{C}-1\right)s^2+k.
\]
Let us separate the analysis into two cases:
\begin{itemize}
    \item[(i)] If $f(s)>0,$ defining 
\begin{equation}\label{xy1}
    \begin{cases}
 x(s)=r(s)\sinh(t(s)),\\
 y(s)=r(s)\cosh(t(s)),\\
    \end{cases}
\end{equation}
for some functions $r(s)$ and $t(s),$ we obtain, by replacing \eqref{xy1} in \eqref{c2}, that
\begin{equation}\label{r1}
(r(s))^2=f(s)=|f(s)|.    
\end{equation}
Replacing \eqref{r1} and \eqref{xy1} in \eqref{arc-l}, we obtain
\[
-(r'(s))^2+(r(s))^2(t'(s))^2=\delta,
\]
which gives, after a multiplication by $4r^2,$ that
\[
-(f'(s))^2+4(f(s))^2(t'(s))^2=4\delta f(s).
\]
This gives the expression for $t(s)$ when $f(s)>0.$

\item[(ii)] If $f(s)<0,$ define
\begin{equation}\label{xy2}
    \begin{cases}
 x(s)=r(s)\cosh(t(s)),\\
 y(s)=r(s)\sinh(t(s)),\\
    \end{cases}
\end{equation}
where $r(s)$ and $t(s)$ are functions to be determined. Replacing \eqref{xy2} into \eqref{c2}, we obtain
\begin{equation}\label{r2}
(r(s))^2=-f(s)=|f(s)|.    
\end{equation}
Now, replacing \eqref{xy2} into \eqref{arc-l} gives
\[
-(r'(s))^2+(r(s))^2(t'(s))^2=-\delta,
\]
which gives, after multiplying the last equation by $4r^2$ and using the expression \eqref{r2}, that
\[
-(f'(s))^2+4(f(s))^2(t'(s))^2=4\delta f(s),
\]
which gives the expression for $t(s).$
\end{itemize}
\end{proof}

\begin{figure}[ht]
	\begin{center}
		\def\svgwidth{1\textwidth} 
\begingroup%
  \makeatletter%
  \providecommand\color[2][]{%
    \errmessage{(Inkscape) Color is used for the text in Inkscape, but the package 'color.sty' is not loaded}%
    \renewcommand\color[2][]{}%
  }%
  \providecommand\transparent[1]{%
    \errmessage{(Inkscape) Transparency is used (non-zero) for the text in Inkscape, but the package 'transparent.sty' is not loaded}%
    \renewcommand\transparent[1]{}%
  }%
  \providecommand\rotatebox[2]{#2}%
  \newcommand*\fsize{\dimexpr\f@size pt\relax}%
  \newcommand*\lineheight[1]{\fontsize{\fsize}{#1\fsize}\selectfont}%
  \ifx\svgwidth\undefined%
    \setlength{\unitlength}{4905bp}%
    \ifx\svgscale\undefined%
      \relax%
    \else%
      \setlength{\unitlength}{\unitlength * \real{\svgscale}}%
    \fi%
  \else%
    \setlength{\unitlength}{\svgwidth}%
  \fi%
  \global\let\svgwidth\undefined%
  \global\let\svgscale\undefined%
  \makeatother%
  \begin{picture}(1,0.33654945)%
    \lineheight{1}%
    \setlength\tabcolsep{0pt}%
    \put(0,0){\includegraphics[width=\unitlength,page=1]{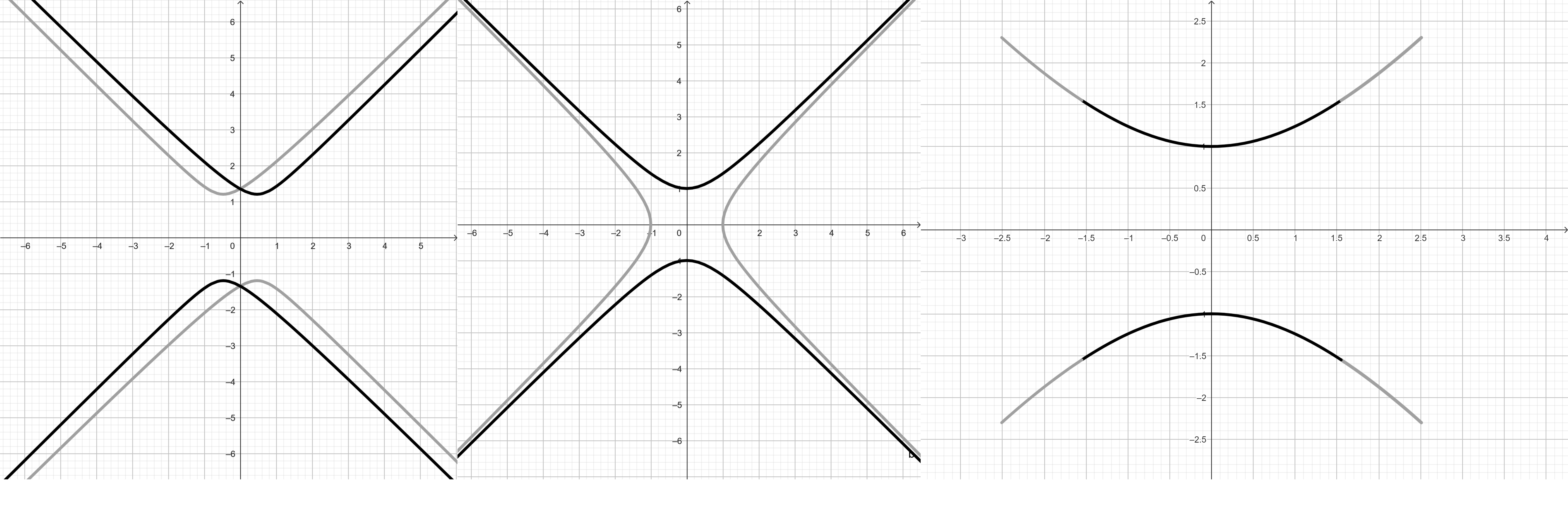}}%
    \put(0.14340025,0.00277289){\color[rgb]{0,0,0}\makebox(0,0)[lt]{\lineheight{1.25}\smash{\begin{tabular}[t]{l}$(a)$\end{tabular}}}}%
    \put(0.76551988,0.0008989){\color[rgb]{0,0,0}\makebox(0,0)[lt]{\lineheight{1.25}\smash{\begin{tabular}[t]{l}$(c)$\end{tabular}}}}%
    \put(0.42933604,0.0027524){\color[rgb]{0,0,0}\makebox(0,0)[lt]{\lineheight{1.25}\smash{\begin{tabular}[t]{l}$(b)$\end{tabular}}}}%
  \end{picture}%
\endgroup%

		\caption{Base curve of cylindrical self-similar solutions of the IMCF given by Theorem \ref{thm-cyl-1}. In each picture, we are drawing the four branches depending on the signal $+$ or $-$ in $t(s)$ and $r(s).$ The figures were drawn for (a) $C=k=\delta=1,$ (b) $C=2,$ $\delta=1,$ and $k=\pm1$ (in this case the curves are the hyperbola $x^2-y^2=\pm1),$ and $C=4,$ $\delta=k=1$. In this last case the curve is not complete.}\label{fig-cyl-1}
	\end{center}
\end{figure}
Now let us consider the case when $w=(0,0,1)$ and $M$ has the parametrization
\[
X(s,t)=\gamma(s)+t(0,0,1).
\]
Taking again the base curve $\gamma$ in a plane orthogonal do $w$ and parametrizing $\gamma$ by the arc length, we have
\[
\gamma(s)=(x(s),y(s),0)
\]
and
\begin{equation}\label{arc-l2}
    (x'(s))^2+(y'(s))^2=1.
\end{equation}
Since
\[
\interno{N}{N}=\frac{\interno{X_s\times X_t}{X_s\times X_t}}{|EG-F^2|}=\frac{-\interno{X_s}{X_s}\interno{X_t}{X_t}+\interno{X_s}{X_t}^2}{|EG-F^2|}=-1,
\]
we know that the surface is always spacelike. In this case, we have that the surface is a self-shrinker if $C<0$ and a self-expander if $C>0.$ The second result of this section is

\begin{theorem}\label{thm-cyl-2}
Let $M$ be a nondegenerate, smooth, cylindrical, ruled surface in $\mathbb{L}^3,$ parametrized by  $X(s,t)=\gamma(s)+t w,$ where $w=(0,0,1)$ and $\gamma(s)=(x(s),y(s),0)$ is a curve parametrized by the arc length lying in a plane orthogonal to $w$. If $M$ is a homothetic self-similar solution of the inverse mean curvature flow, then
\[
\gamma(s)=(r(s)\cos(t(s)),r(s)\sin(t(s)),0),
\]
where
\[
\begin{cases}
(r(s))^2&=\left(1-\frac{2}{C}\right)s^2+k>0,\\
t(s)&=\displaystyle\pm\int\frac{\sqrt{\frac{2}{C}\left(1-\frac{2}{C}\right)s^2+k}}{\left(1-\frac{2}{C}\right)s^2+k},
\end{cases}
\]
where $k\in\mathbb{R}$ is a constant (see Figure \ref{fig-cyl-2}).
\end{theorem}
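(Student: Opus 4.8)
The plan is to follow the strategy of the proof of Theorem \ref{thm-cyl-1}, adjusting the signs to the present situation in which the ruling $w$ is timelike and the base plane is spacelike. First I would record the first fundamental form: from $X_s=\gamma'=(x',y',0)$ and $X_t=w=(0,0,1)$ one finds $E=\interno{\gamma'}{\gamma'}=(x')^2+(y')^2=1$, $F=\interno{\gamma'}{w}=0$, and $G=\interno{w}{w}=-1$, so $EG-F^2=-1$ and $(EG-F^2)^2=1$. Since $X_{st}=X_{tt}=0$ and $F=0$, the soliton equation \eqref{princ} reduces to $C\,G\,(X_s,X_t,X)(X_s,X_t,X_{ss})=2$. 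Evaluating the two mixed products as $3\times3$ determinants gives $(X_s,X_t,X)=xy'-yx'$ and $(X_s,X_t,X_{ss})=y'x''-x'y''$, so the equation becomes
\begin{equation}
-C(xy'-yx')(y'x''-x'y'')=2 .
\end{equation}

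Differentiating the arc length relation \eqref{arc-l2} gives $x'x''+y'y''=0$. Setting $W:=xy'-yx'$, which cannot vanish because the reduced equation forces $CW(y'x''-x'y'')=-2$, I would read these two relations as a linear system in the unknowns $x''$ and $y''$; its coefficient determinant equals $-((x')^2+(y')^2)=-1$, and Cramer's rule yields $x''=-2y'/(CW)$ and $y''=2x'/(CW)$. Multiplying the first identity by $\tfrac{C}{2}x$ and the second by $\tfrac{C}{2}y$ and adding eliminates $W$ and produces $xx''+yy''=-2/C$. Introducing $u:=xx'=\tfrac12(x^2)'$ and $v:=yy'=\tfrac12(y^2)'$ and using \eqref{arc-l2}, this becomes $u'+v'=1-2/C$; integrating (after a translation of $s$) gives the first integral
\begin{equation}
x(s)^2+y(s)^2=\Big(1-\tfrac{2}{C}\Big)s^2+k
\end{equation}
for a constant $k\in\R$.

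Finally I would pass to the polar-type coordinates $x=r\cos t$ and $y=r\sin t$, so that $r^2=f(s):=\big(1-\tfrac{2}{C}\big)s^2+k$; here the positivity $f>0$ is needed and is guaranteed by $r^2\ge0$ together with the regularity of $\gamma$. Substituting into \eqref{arc-l2} and using $(x')^2+(y')^2=(r')^2+r^2(t')^2=1$ reduces the problem to $r^2(t')^2=1-(r')^2$; since $2rr'=f'$ gives $(r')^2=(f')^2/(4f)$, one gets $(t')^2=\big(4f-(f')^2\big)/(4f^2)$. A short computation shows $4f-(f')^2=4\big[\tfrac{2}{C}\big(1-\tfrac{2}{C}\big)s^2+k\big]$, which gives exactly the stated $t'=\pm\sqrt{\tfrac{2}{C}(1-\tfrac{2}{C})s^2+k}\big/\big((1-\tfrac{2}{C})s^2+k\big)$, and integration finishes the proof. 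The only delicate points are keeping track of the Lorentzian signs — the $-dz^2$ term is what produces $G=-1$ and hence the sign of the reduced equation — and verifying that $W\neq0$ and $f>0$, so that each division and the polar substitution are justified; everything else is routine.
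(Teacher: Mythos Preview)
Your proof is correct and follows essentially the same route as the paper's own argument: reduce \eqref{princ} to $C(xy'-yx')(y'x''-x'y'')=-2$ using $E=1$, $F=0$, $G=-1$, combine with the derivative of \eqref{arc-l2} to solve for $x''$ and $y''$, derive the first integral $x^2+y^2=\big(1-\tfrac{2}{C}\big)s^2+k$, and finish with the polar substitution. The only cosmetic differences are your explicit use of Cramer's rule and your slightly more careful bookkeeping of the nondegeneracy conditions $W\neq0$ and $f>0$; the computations and the overall strategy coincide with the paper's proof.
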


\begin{figure}[ht]
	\begin{center}
		\def\svgwidth{1\textwidth} 
\begingroup%
  \makeatletter%
  \providecommand\color[2][]{%
    \errmessage{(Inkscape) Color is used for the text in Inkscape, but the package 'color.sty' is not loaded}%
    \renewcommand\color[2][]{}%
  }%
  \providecommand\transparent[1]{%
    \errmessage{(Inkscape) Transparency is used (non-zero) for the text in Inkscape, but the package 'transparent.sty' is not loaded}%
    \renewcommand\transparent[1]{}%
  }%
  \providecommand\rotatebox[2]{#2}%
  \newcommand*\fsize{\dimexpr\f@size pt\relax}%
  \newcommand*\lineheight[1]{\fontsize{\fsize}{#1\fsize}\selectfont}%
  \ifx\svgwidth\undefined%
    \setlength{\unitlength}{5170.49993079bp}%
    \ifx\svgscale\undefined%
      \relax%
    \else%
      \setlength{\unitlength}{\unitlength * \real{\svgscale}}%
    \fi%
  \else%
    \setlength{\unitlength}{\svgwidth}%
  \fi%
  \global\let\svgwidth\undefined%
  \global\let\svgscale\undefined%
  \makeatother%
  \begin{picture}(1,0.32285603)%
    \lineheight{1}%
    \setlength\tabcolsep{0pt}%
    \put(0,0){\includegraphics[width=\unitlength,page=1]{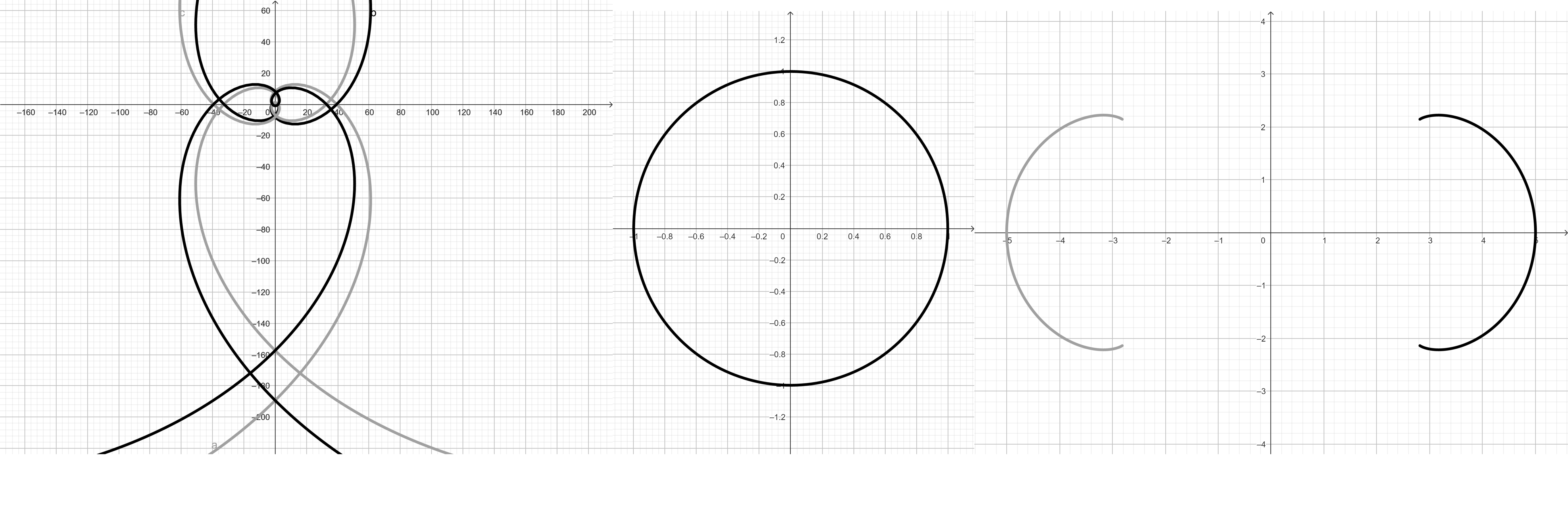}}%
    \put(0.13597644,0.00284252){\color[rgb]{0,0,0}\makebox(0,0)[lt]{\lineheight{1.25}\smash{\begin{tabular}[t]{l}$(a)$\end{tabular}}}}%
    \put(0.47974248,0.00394991){\color[rgb]{0,0,0}\makebox(0,0)[lt]{\lineheight{1.25}\smash{\begin{tabular}[t]{l}$(b)$\end{tabular}}}}%
    \put(0.78891015,0.00512212){\color[rgb]{0,0,0}\makebox(0,0)[lt]{\lineheight{1.25}\smash{\begin{tabular}[t]{l}$(c)$\end{tabular}}}}%
  \end{picture}%
\endgroup%

		\caption{Base curve of cylindrical self-similar solutions of the IMCF given by Theorem \ref{thm-cyl-1}. In each picture, we are drawing the four branches depending on the sinal $+$ or $-$ in $t(s)$ and $r(s).$ The figures were drawn for (a) $C=4$ and $k=1,$ (b) $C=2$ and $k=1$ (in this case the curve is the circle $x^2+y^2=1),$ and $C=1,$ $k=25$. In this last case, the curve is not complete.}\label{fig-cyl-2}
	\end{center}
\end{figure}

\begin{proof}
Since $X_s=(x'(s),y'(s),0)$ and $X_t=(0,0,1),$ we obtain $E=1,$ $F=0,$ and $G=-1.$ Thus, the self-similar solution equation \eqref{princ} becomes
  \begin{equation}\label{w-tempo}
  C(xy'-yx')(y'x''-x'y'')=-2.
  \end{equation}
Differentiating the arc length equation and using \eqref{w-tempo}, we obtain the system
\[
\begin{cases}
    x'x''+y'y''&=0\\
    y'x''-x'y''&=\dfrac{-2}{C(xy'-yx')},
\end{cases}
\]
whose solution is
\[
x''=\frac{-2y'}{C(xy'-yx')} \quad \mbox{and} \quad y''=\frac{2x'}{C(xy'-yx')}.
\]
This gives
\begin{equation}\label{subs2}
    xx''+yy''=-\frac{2}{C}.
\end{equation}
Define the functions  $$u(s):=y(s)y'(s)=\dfrac{1}{2}(y(s)^2)'\hspace{0.5cm}\text{and}\hspace{0.5cm} v(s):=x(s)x'(s)=\dfrac{1}{2}(x(s)^2)'.$$ 
By adding $u'$ and $v',$ and then replacing \eqref{subs2} in the result, we obtain
\begin{equation}\label{u+v}
  u'(s)+v'(s)=1-\frac{2}{C}.
\end{equation}
This gives, by integration and after translating the parameter $s,$ that
\begin{equation}
   u(s)+v(s)=\frac{1}{2}\left[(x(s)^2)'+(y(s)^2)'\right]=\left(1-\frac{2}{C}\right)s,
\end{equation}
i.e.,
\begin{equation}\label{xy-sq}
    x(s)^2+y(s)^2=\left(1-\frac{2}{C}\right)s^2+k=:f(s),
\end{equation}
where $k\in\mathbb{R}$ is a constant. Notice that $f(s)$ defined in \eqref{xy-sq} is nonnegative. Let
\begin{equation}\label{xy3}
\begin{cases}
    x(s)&=r(s)\cos(t(s)),\\
    y(s)&=r(s)\sin(t(s)),\\
\end{cases}
\end{equation}
where $r(s)$ and $t(s)$ are functions to be determined. Replacing \eqref{xy3} into \eqref{xy-sq} gives 
\begin{equation}\label{r3}
r(s)^2=f(s).    
\end{equation}
Replacing \eqref{r3} and \eqref{xy3} into \eqref{arc-l2}, we obtain
\[
(r'(s))^2+(r(s))^2(t'(s))^2=1,
\]
which is equivalent (after multiplying by $4r^2$) to
\[
(f'(s))^2+4(f(s))^2(t'(s))^2=4f(s).
\]
This gives, after integration in $s$, the expression for $t(s).$
\end{proof}

\end{document}